\documentclass[12pt]{amsart}

\usepackage[margin=1in]{geometry}
\usepackage{amsmath,amssymb,amsthm}
\usepackage{setspace}
\usepackage{natbib}
\usepackage{hyperref}

\newtheorem{theorem}{Theorem}[section]
\newtheorem{proposition}[theorem]{Proposition}
\newtheorem{lemma}[theorem]{Lemma}
\newtheorem{corollary}[theorem]{Corollary}
\theoremstyle{definition}
\newtheorem{remark}[theorem]{Remark}
\newtheorem{example}[theorem]{Example}

\newcommand{\Res}{\operatorname{Res}}
\newcommand{\Disc}{\operatorname{Disc}}
\DeclareMathOperator{\len}{\ell}

\title[Resultant of an equivariant system for $G(r,n)$]
{Resultant of an equivariant polynomial system with respect to the reflection group $G(r,n)$}

\author{Sonagnon Julien Owolabi $^1$, Ibrahim Nonkan\'{e}$^2$ and
Joel Tossa$^3$ } 
\address{ $^{1,3}$Institut de Math\'{e}matiques et de Sciences Physiques, IMSP, Universit\'{e} d'Abomey-Calavi, B\'{e}nin ,$^2$D\'epartement d'\'Economie et Math\'ematiques Appliqu\'ees, Universit\'e Thomas Sankara, Ouagadougou, Burkina Faso}
\email{$^1$julien.owolabi@imsp-auc.org, $^2$ibrahim.nonkane@uts.bf, $^3$joel.tossa@imsp-uac.org}

\date{\today}

\begin{document}

\begin{abstract}
We consider systems of homogeneous multivariate polynomials equivariant under the complex reflection group $G(r,n) = (\mathbb{Z}/r\mathbb{Z})^n \rtimes S_n$. Using divided differences indexed by partitions of $n$, we establish a decomposition formula expressing the resultant of such a system as a product of resultants of smaller, partition-indexed subsystems. Combining this with the classical resultant--discriminant relation, we show that the discriminant of a $G(r,n)$-invariant homogeneous polynomial splits explicitly into a product of resultants of smaller subsystems, considerably easier to compute; we illustrate both decompositions with worked examples.
\end{abstract}

\subjclass[2020]{13P15, 14Q99, 20F55}
\keywords{Resultant, discriminant, complex reflection group, equivariant polynomial system, divided differences}

\maketitle
\footnote{Corresponding author: Ibrahim Nonkan\'{e}, email address of the corresponding author: Ibrahim.nonkane@uts.bf} 
\section{Introduction}

Resultants and discriminants are two of the most fundamental tools of elimination theory: the resultant of $n$ homogeneous polynomials in $n$ variables detects whether the corresponding projective hypersurfaces have a common point, while the discriminant of a single homogeneous polynomial detects whether the hypersurface it defines is singular. Both can, in principle, be computed for an arbitrary system through Macaulay's classical determinantal formula \citep{Macaulay1902}; in practice, however, the size of the relevant matrices grows combinatorially with the number of variables and the degree, so that a direct computation quickly becomes intractable. At the same time, polynomial systems arising in applications very often carry a natural symmetry. This is the case, for instance, of the equilibrium equations for $n$ interacting point vortices in the plane, which are invariant under the symmetric group $S_n$ permuting the vortices \citep{FaugereSvartz2012}; it is also the case, more generally, whenever a system of equations is built, by construction, so as to be compatible with the action of a finite group of linear symmetries of the ambient space. Exploiting such a symmetry to reduce the computation of a resultant or a discriminant to that of several resultants of much smaller systems is therefore both a natural and a computationally useful question. Bus\'e and Karasoulou \citep{BuseKarasoulou2016} answered it for systems equivariant under the symmetric group $S_n$; the present note extends their approach to systems equivariant under the larger family of complex reflection groups $G(r,n)$ defined below, and draws from it an explicit decomposition formula for the discriminant of a $G(r,n)$-invariant homogeneous polynomial.

Let $S_n$ be the group of permutations of the set of variables $\{x_1,\dots,x_n\}$ and $\mathbb{Z}/r\mathbb{Z}$ be the cyclic group of order $r$ which acts on the $x_i$ by a primitive $r$th root of unity.

The reflection group $G(r,n)$ is the semi-direct product of $(\mathbb{Z}/r\mathbb{Z})^n$ with $S_n$, written $(\mathbb{Z}/r\mathbb{Z})^n \rtimes S_n$, where $(\mathbb{Z}/r\mathbb{Z})^n$ is the direct product of $n$ copies of $\mathbb{Z}/r\mathbb{Z}$. Let $\xi$ be a primitive $r$-th root of $1$.
\[
(\mathbb{Z}/r\mathbb{Z})^n \rtimes S_n = \{ (\xi^{i_1},\dots,\xi^{i_n};\sigma) \mid i_k \in \{0,1,\dots,r-1\},\ \sigma \in S_n \}.
\]
Let $\mathbb{C}[x_1,\dots,x_n]$ be the ring of polynomials in $n$ indeterminates on which the group $G(r,n)$ acts as follows: for $f \in \mathbb{C}[x_1,\dots,x_n]$ and $(\xi^{i_1},\dots,\xi^{i_n};\sigma) \in G(r,n)$,
\[
(\xi^{i_1},\dots,\xi^{i_n};\sigma)f = f(\xi^{i_{\sigma(1)}}x_{\sigma(1)},\dots,\xi^{i_{\sigma(n)}}x_{\sigma(n)}).
\]

A polynomial system $A := \{f^{\{1\}}, f^{\{2\}}, \dots, f^{\{n\}}\}$ is said to be \emph{equivariant} with respect to a finite group $G$ if for all $g \in G$ and all $i = 1,\dots,n$, we have $g(f^{\{i\}}) \in A$. In other words $A$ is globally stable under the finite group $G$. Whenever, as is the case for $G(r,n)$ below, $G$ acts on the index set $\{1,\dots,n\}$ through its permutation quotient, we will always use this finer, labelled form of equivariance, requiring $g(f^{\{i\}}) = f^{\{g\cdot i\}}$ for the specific index $g\cdot i$ rather than merely $g(f^{\{i\}}) \in A$; this is made precise for $G(r,n)$ just below.

Consider a system of $n$ homogeneous polynomials $f^{\{1\}}, f^{\{2\}}, \dots, f^{\{n\}}$ in $\mathbb{C}[x_1,\dots,x_n]$ of the same degree $d$, equivariant with respect to the reflection group $G(r,n)$. More precisely, we assume that for any integer $i \in \{1,\dots,n\}$ and $(\xi^{i_1},\dots,\xi^{i_n};\sigma) \in G(r,n)$,
\[
(\xi^{i_1},\dots,\xi^{i_n};\sigma)f^{\{i\}} := f^{\{i\}}(\xi^{i_{\sigma(1)}}x_{\sigma(1)},\dots,\xi^{i_{\sigma(n)}}x_{\sigma(n)}) = f^{\{\sigma(i)\}}(x_1,\dots,x_n).
\]

In this work, we study the resultant of such systems and obtain a decomposition formula for the discriminant of an invariant multivariate homogeneous polynomial under the reflection group $G(r,n)$.

Note that $G(r,n)$ is the group denoted $G(r,1,n)$ in the Shephard--Todd classification of complex reflection groups; it specializes, for $r=1$, to the symmetric group $S_n$ itself, and, for $r=2$, to the hyperoctahedral group of signed permutations, so that the results below apply in particular to these two classical and widely used groups. Our main tool is a decomposition formula, established by Bus\'e and Karasoulou \citep{BuseKarasoulou2016}, for the resultant of an $S_n$-equivariant homogeneous polynomial system; we extend it to $G(r,n)$-equivariant systems, and we combine it with the relation between the resultant and the discriminant established by Bus\'e and Jouanolou \citep{BuseJouanolou2014} to obtain our decomposition formula for the discriminant. In the particular case $r=1$, i.e.\ for $S_n$-invariant polynomials, related decomposition formulas for the discriminant were obtained by Perminov and Shakirov \citep{PerminovShakirov2009}.

The paper is organized as follows. In Section~\ref{sec:resultant}, we establish the decomposition formula for the resultant of a $G(r,n)$-equivariant polynomial system, first in terms of the polynomials $P^{\{i\}}$ (Theorem~\ref{thm:main1}) and then, via divided differences indexed by partitions of $n$, directly in terms of the original system $f^{\{1\}}, \dots, f^{\{n\}}$ (Theorem~\ref{thm:main2}); several worked numerical examples illustrate the resulting simplification. In Section~\ref{sec:discriminant}, we apply this decomposition to the discriminant of a $G(r,n)$-invariant homogeneous polynomial, first via the relation between the resultant and the discriminant (Theorem~\ref{thm:disc1}), and then, more explicitly, via a direct differentiation argument (Theorem~\ref{thm:disc2}). We conclude in Section~\ref{sec:conclusion} with a summary and some perspectives for further work.

\section{Resultant of a $G(r,n)$-equivariant polynomial system}
\label{sec:resultant}

In this section, we consider a polynomial system of $n$ homogeneous polynomials $f^{\{1\}}, f^{\{2\}}, \dots, f^{\{n\}}$ in $\mathbb{C}[x_1,\dots,x_n]$ of the same degree $d$. They are of the form
\[
f^{\{i\}}(x_1,\dots,x_n) := \sum_{|\alpha|=d} a_{i,\alpha}\, x^{\alpha}, \qquad i = 1,\dots,n,
\]
where $\alpha = (\alpha_1,\dots,\alpha_n) \in \mathbb{N}^n$, $|\alpha| := \sum_{i=1}^n \alpha_i$, $x^\alpha = x_1^{\alpha_1}\cdots x_n^{\alpha_n}$.

This system is equivariant with respect to the reflection group $G(r,n)$ if $r$ divides $d$, for $i \in \{1,\dots,n\}$, $r$ divides every $\alpha_i$ appearing in $f^{\{i\}}$, and for all $\sigma \in S_n$, $\sigma(f^{\{i\}}) = f^{\{\sigma(i)\}}$.

\begin{remark}
This condition is in fact equivalent to the $G(r,n)$-equivariance of the system $\{f^{\{1\}},\dots,f^{\{n\}}\}$ in the sense of the Introduction, i.e.\ to requiring $(\xi^{i_1},\dots,\xi^{i_n};\sigma)f^{\{i\}} = f^{\{\sigma(i)\}}$ for every $(\xi^{i_1},\dots,\xi^{i_n};\sigma)\in G(r,n)$ and every $i \in \{1,\dots,n\}$.

($\Rightarrow$) Taking $\sigma = \mathrm{id}$ and letting $(i_1,\dots,i_n)$ range over $\{0,\dots,r-1\}^n$ shows that every $f^{\{i\}}$ is invariant under every diagonal torsion $x_k \mapsto \xi^{i_k}x_k$; comparing the coefficient of each monomial $x^\alpha$ of $f^{\{i\}}$ on both sides forces $\xi^{\alpha_k}=1$ for every $k$, i.e.\ $r$ divides every $\alpha_k$. Taking instead $i_1=\dots=i_n=0$ and $\sigma \in S_n$ arbitrary gives $\sigma(f^{\{i\}}) = f^{\{\sigma(i)\}}$.

($\Leftarrow$) Conversely, assume $r$ divides every exponent of every $f^{\{i\}}$ and $\sigma(f^{\{i\}}) = f^{\{\sigma(i)\}}$ for all $\sigma \in S_n$. Since every exponent of $f^{\{i\}}$ is divisible by $r$, we may write $f^{\{i\}}(x_1,\dots,x_n) = Q^{\{i\}}(x_1^r,\dots,x_n^r)$ for some polynomial $Q^{\{i\}}$ (this is the polynomial $P^{\{i\}}$ introduced below). For any $(\xi^{i_1},\dots,\xi^{i_n};\sigma)\in G(r,n)$, using $\xi^r=1$,
\begin{align*}
f^{\{i\}}(\xi^{i_{\sigma(1)}}x_{\sigma(1)},\dots,\xi^{i_{\sigma(n)}}x_{\sigma(n)}) &= Q^{\{i\}}\big((\xi^{i_{\sigma(1)}}x_{\sigma(1)})^r,\dots\big) \\
&= Q^{\{i\}}(x_{\sigma(1)}^r,\dots,x_{\sigma(n)}^r) = f^{\{i\}}(x_{\sigma(1)},\dots,x_{\sigma(n)}),
\end{align*}
the last equality being the assumed $\sigma$-equivariance. Hence $(\xi^{i_1},\dots,\xi^{i_n};\sigma)f^{\{i\}} = f^{\{\sigma(i)\}}$, as required.
\end{remark}

Note also that the requirement ``$r$ divides $d$'' is redundant: it follows automatically from ``$r$ divides $\alpha_i$ for all $i$'', since $d = |\alpha| = \sum_{i=1}^n \alpha_i$ is then itself a multiple of $r$; we keep it for emphasis.

Since every exponent $\alpha_i$ appearing in $f^{\{i\}}$ is divisible by $r$, for all $i \in \{1,\dots,n\}$,
\[
f^{\{i\}}(x_1,\dots,x_n) = P^{\{i\}}(x_1^r, x_2^r,\dots,x_n^r)
\]
where $P^{\{1\}}, P^{\{2\}}, \dots, P^{\{n\}}$ are $n$ homogeneous polynomials in $\mathbb{C}[x_1,\dots,x_n]$ of the same degree $d/r$.

For any integer $i \in \{1,\dots,n\}$ and $(\xi^{i_1},\dots,\xi^{i_n};\sigma) \in G(r,n)$, using $\xi^r = 1$ we get
\begin{align*}
(\xi^{i_1},\dots,\xi^{i_n};\sigma)f^{\{i\}} &:= f^{\{i\}}(\xi^{i_{\sigma(1)}}x_{\sigma(1)},\dots,\xi^{i_{\sigma(n)}}x_{\sigma(n)}) \\
&= P^{\{i\}}\big((\xi^{i_{\sigma(1)}}x_{\sigma(1)})^r,\dots,(\xi^{i_{\sigma(n)}}x_{\sigma(n)})^r\big) \\
&= P^{\{i\}}(x_{\sigma(1)}^r,\dots,x_{\sigma(n)}^r),
\end{align*}
so that the torsion part of the group element plays no role in this computation. On the other hand, since the system $f^{\{1\}},\dots,f^{\{n\}}$ is equivariant with respect to $G(r,n)$, the same left-hand side equals $f^{\{\sigma(i)\}}(x_1,\dots,x_n) = P^{\{\sigma(i)\}}(x_1^r,\dots,x_n^r)$. Combining the two computations, we obtain
\[
P^{\{i\}}(x_{\sigma(1)}^r,\dots,x_{\sigma(n)}^r) = P^{\{\sigma(i)\}}(x_1^r,\dots,x_n^r)
\]
for all $x_1,\dots,x_n$. The $\mathbb{C}$-algebra homomorphism $\mathbb{C}[y_1,\dots,y_n] \to \mathbb{C}[x_1,\dots,x_n]$, $y_k \mapsto x_k^r$, sends distinct monomials $y^\beta$ to distinct monomials $x^{r\beta}$, and is therefore injective. Since both sides of the identity above are the image, under this map, of the polynomials $P^{\{i\}}(y_{\sigma(1)},\dots,y_{\sigma(n)})$ and $P^{\{\sigma(i)\}}(y_1,\dots,y_n)$ respectively, injectivity allows us to cancel the substitution $y_k = x_k^r$ and conclude that already
\[
P^{\{i\}}(x_{\sigma(1)},\dots,x_{\sigma(n)}) = P^{\{\sigma(i)\}}(x_1,\dots,x_n)
\]
holds as an identity of polynomials in $\mathbb{C}[x_1,\dots,x_n]$. Thus $P^{\{1\}}, P^{\{2\}},\dots,P^{\{n\}}$ form an $S_n$-equivariant system of homogeneous polynomials of degree $d/r$.

Throughout the rest of the paper we use freely, without further comment, the classical properties of the resultant of $n$ homogeneous polynomials in $n$ variables: homogeneity in the coefficients of each polynomial, multiplicativity, invariance up to sign under a linear change of variables and under permutation of the polynomials, and the normalization $\Res(x_1^{d_1},\dots,x_n^{d_n}) = 1$ for any degrees $d_1,\dots,d_n$; see \citep[\S 5]{Jouanolou1991}, \citep{Jouanolou1997}, or the textbook account in \citep[Chapter 3]{CoxLittleOShea2005}.

\subsection{Divided differences and partitions}

Let $f^{\{1\}},\dots,f^{\{n\}}$ be $n$ homogeneous polynomials in $\mathbb{C}[x_1,\dots,x_n]$ of the same degree $d > 1$. We assume that for all $i \in \{1,\dots,n\}$, $f^{\{i\}}(x_1,\dots,x_n) = P^{\{i\}}(x_1^r,\dots,x_n^r)$ with $r > 1$.

The divided differences of the polynomials $P^{\{1\}},\dots,P^{\{n\}}$ are defined by
\[
P^{\{i_1,\dots,i_k\}} = \frac{P^{\{i_1,\dots,i_{k-1}\}} - P^{\{i_1,\dots,i_{k-2},i_k\}}}{x_{i_{k-1}} - x_{i_k}}
\]
for any given set of distinct integers $I := \{i_1,\dots,i_k\} \subset [n]$.

A priori, this recursive formula only defines $P^I$ as a rational function of $x_1,\dots,x_n$, for a fixed ordered tuple $(i_1,\dots,i_k)$. The next lemma shows that, thanks to the $S_n$-equivariance of $P^{\{1\}},\dots,P^{\{n\}}$ established above, $P^I$ is in fact a well-defined homogeneous polynomial depending only on the set $I$, and not on the order in which its elements are listed; this justifies the set notation $P^I = P^{\{i_1,\dots,i_k\}}$ used throughout the paper.

\begin{lemma}
\label{lem:dividedwelldefined}
For all $i \ne j$ in $\{1,\dots,n\}$, $x_i - x_j$ divides $P^{\{i\}} - P^{\{j\}}$. Consequently, for every $I \subset [n]$ with $|I|=k$, $P^I$ is a homogeneous polynomial of degree $d/r - k + 1$, which depends only on the set $I$ and not on the order in which its elements are listed.
\end{lemma}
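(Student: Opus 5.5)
The first claim follows from the $S_n$-equivariance of $P^{\{1\}},\dots,P^{\{n\}}$ established above. Let $\tau=(i\,j)$ be the transposition. Then $P^{\{j\}}=P^{\{\tau(i)\}}=P^{\{i\}}(x_{\tau(1)},\dots,x_{\tau(n)})$, so $P^{\{i\}}-P^{\{j\}}=P^{\{i\}}-\tau P^{\{i\}}$. This polynomial vanishes on the hyperplane $x_i=x_j$, since there $\tau$ acts trivially on the point. Because $x_i-x_j$ is irreducible, and $\mathbb{C}[x]/(x_i-x_j)\cong\mathbb{C}[x_1,\dots,\widehat{x_j},\dots,x_n]$ is a domain, $x_i-x_j$ divides $P^{\{i\}}-P^{\{j\}}$. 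The quotient is homogeneous of degree $d/r-1$.

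For the general statement I would avoid tracking the recursion case by case and instead use the closed formula for divided differences:
\[
P^{(i_1,\dots,i_k)}=\sum_{s=1}^k \frac{P^{\{i_s\}}}{\prod_{t\ne s}(x_{i_s}-x_{i_t})}.
\]
I would prove this by induction on $k$, using the recursion $P^{(i_1,\dots,i_k)}=\bigl(P^{(i_1,\dots,i_{k-1})}-P^{(i_1,\dots,i_{k-2},i_k)}\bigr)/(x_{i_{k-1}}-x_{i_k})$. The inductive step is the standard partial-fraction identity: the terms with $s\le k-2$ combine via $\frac{1}{x_{i_s}-x_{i_{k-1}}}-\frac{1}{x_{i_s}-x_{i_k}}=\frac{x_{i_{k-1}}-x_{i_k}}{(x_{i_s}-x_{i_{k-1}})(x_{i_s}-x_{i_k})}$, and the two remaining terms give the $s=k-1$ and $s=k$ summands directly. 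The right-hand side is visibly symmetric under reordering $(i_1,\dots,i_k)$, so $P^I$ as a rational function depends only on the set $I$. It is also homogeneous of degree $d/r-(k-1)$, being a sum of homogeneous pieces of that degree.

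It then remains to show that $P^I$ is a polynomial; I expect this to be the main obstacle. Clearing denominators, $V_I\cdot P^I$ is a polynomial, where $V_I=\prod_{s<t}(x_{i_s}-x_{i_t})$. The factors $x_{i_s}-x_{i_t}$ are pairwise non-associate irreducibles, so it suffices to show that each one divides the numerator, i.e. that $P^I$ has no pole along $x_{i_s}=x_{i_t}$. By the order-independence just proved, I may reorder so that $\{i_s,i_t\}=\{i_{k-1},i_k\}$. By induction on $k$, $P^{(i_1,\dots,i_{k-1})}$ and $P^{(i_1,\dots,i_{k-2},i_k)}$ are polynomials. They are exchanged by the transposition $\tau=(i_{k-1}\,i_k)$, so their difference vanishes on $x_{i_{k-1}}=x_{i_k}$.

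To justify that exchange, I would check that the equivariance $\sigma P^{\{i\}}=P^{\{\sigma(i)\}}$ propagates through the recursion. Applying $\tau$ to the closed formula permutes the summands, since $\tau$ fixes every $i_s$ with $s\le k-2$. Hence $\tau P^{(i_1,\dots,i_{k-2},i_{k-1})}=P^{(i_1,\dots,i_{k-2},i_k)}$, with no reordering needed. The same divisibility argument as in the base case then shows that $x_{i_{k-1}}-x_{i_k}$ divides the difference. This gives regularity along the chosen hyperplane, and by symmetry along every hyperplane $x_{i_s}=x_{i_t}$, so $P^I$ is a polynomial of degree $d/r-k+1$, interpreted as $0$ when this is negative.
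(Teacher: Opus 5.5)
Your proof is correct, but its inductive step takes a different route from the paper's. The paper never writes a closed formula. From the recursion, together with order-independence at the previous level, it derives the three-term relation
\[
(x_i-x_{k'})\big(P^{J\cup\{i,j\}}-P^{J\cup\{i,k'\}}\big)=(x_j-x_{k'})\big(P^{J\cup\{i,j\}}-P^{J\cup\{j,k'\}}\big).
\]
Since $x_i-x_{k'}$ and $x_j-x_{k'}$ are coprime, it concludes that $x_j-x_{k'}$ divides $P^{J\cup\{i,j\}}-P^{J\cup\{i,k'\}}$. Order-independence is then obtained from the same kind of relation by a ``symmetric computation'' that the paper only sketches. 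In that proof $S_n$-equivariance enters only in the base case, so the argument works for any family satisfying $x_i-x_j\mid P^{\{i\}}-P^{\{j\}}$.

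Your Lagrange-type formula gives order-independence and homogeneity at once and rigorously, which is exactly the part the paper leaves sketchy. You also handle the case $k>d/r+1$, where $P^I=0$, which the paper ignores. The price is that your polynomiality step uses equivariance at every level, through $\tau P^{(i_1,\dots,i_{k-1})}=P^{(i_1,\dots,i_{k-2},i_k)}$. Incidentally, this proves the transposition case of Corollary~\ref{cor:transitive}.

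Two small remarks. First, the hyperplane-by-hyperplane reduction is unnecessary. Once $P^I=(A-\tau A)/(x_{i_{k-1}}-x_{i_k})$ with $A$ a polynomial and $x_{i_{k-1}}-x_{i_k}\mid A-\tau A$, $P^I$ is already a polynomial for a single ordering. Second, your closed formula can recover the paper's generality without any induction. The only two summands with a pole along $x_a=x_b$ combine to $(P^{\{a\}}Q_b-P^{\{b\}}Q_a)/\big((x_a-x_b)Q_aQ_b\big)$, where $Q_c=\prod_{t\in I\setminus\{a,b\}}(x_c-x_t)$. The numerator vanishes on $x_a=x_b$, because $Q_a=Q_b$ there and $P^{\{a\}}=P^{\{b\}}$ there by pairwise divisibility alone.
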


\begin{proof}
Let $i \ne j$ and let $\tau = (i\, j) \in S_n$ be the transposition exchanging $i$ and $j$. Since $P^{\{1\}},\dots,P^{\{n\}}$ form an $S_n$-equivariant system, $\tau(P^{\{i\}}) = P^{\{\tau(i)\}} = P^{\{j\}}$, i.e.\ $P^{\{i\}}(x_1,\dots,x_n)$ with $x_i$ and $x_j$ exchanged equals $P^{\{j\}}(x_1,\dots,x_n)$. Hence $P^{\{i\}} - P^{\{j\}} = P^{\{i\}} - \tau(P^{\{i\}})$ vanishes when $x_i = x_j$, so $x_i - x_j$ divides $P^{\{i\}} - P^{\{j\}}$; this is the case $|I|=2$.

We argue by induction on $|I|=k \geqslant 2$ that $P^I$ is a well-defined polynomial, independent of the chosen order on $I$; the case $k=2$ is what we just proved. For $J \subset [n]$ and distinct $i,j,k' \notin J$ with $|J|=k-2$, a direct computation from the recursive definition of divided differences gives
\[
(x_i - x_j)P^{J\cup\{i,j\}} - (x_i - x_{k'})P^{J\cup\{i,k'\}} + (x_j - x_{k'})P^{J\cup\{j,k'\}} = 0,
\]
which can be rewritten as
\[
(x_i - x_{k'})\big(P^{J\cup\{i,j\}} - P^{J\cup\{i,k'\}}\big) = (x_j - x_{k'})\big(P^{J\cup\{i,j\}} - P^{J\cup\{j,k'\}}\big).
\]
By the induction hypothesis, $P^{J\cup\{i,j\}}$, $P^{J\cup\{i,k'\}}$ and $P^{J\cup\{j,k'\}}$ are well-defined polynomials depending only on their index sets; the identity above then forces $x_j - x_{k'}$ to divide $P^{J\cup\{i,j\}} - P^{J\cup\{i,k'\}}$, so that $P^{J\cup\{i,j,k'\}} = \big(P^{J\cup\{i,j\}} - P^{J\cup\{i,k'\}}\big)/(x_j - x_{k'})$ is itself a well-defined polynomial; a symmetric computation shows it does not depend on which of $i,j,k'$ is singled out last, and hence, together with the induction hypothesis, on no ordering at all of $I = J \cup \{i,j,k'\}$. This settles the induction (this is a classical property of divided differences; see e.g.\ \citep{Jouanolou1991}).

Finally, since each $P^{\{i\}}$ is homogeneous of degree $d/r$ and each divided difference lowers the degree by exactly one, $P^I$ is homogeneous of degree $d/r - |I| + 1$.
\end{proof}

\begin{corollary}
\label{cor:transitive}
For every $\sigma \in S_n$ and every $I \subset [n]$, $\sigma(P^I) = P^{\sigma(I)}$. Consequently, if $|I| = d/r + 1$ (so that $P^I$ is homogeneous of degree $0$, i.e.\ a constant, by Lemma~\ref{lem:dividedwelldefined}), this constant does not depend on the choice of the subset $I \subset [n]$ of size $d/r+1$: since $S_n$ acts transitively on such subsets and fixes every constant, $P^I = P^J$ for all $I, J \subset [n]$ with $|I|=|J|=d/r+1$. This is what justifies writing $P^{\{1,\dots,d/r+1\}}$, as in Theorem~\ref{thm:main1} below, without specifying which particular subset of that size is meant.
\end{corollary}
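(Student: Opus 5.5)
I will prove $\sigma(P^I)=P^{\sigma(I)}$ by induction on $k=|I|$, working directly from the recursive definition. Lemma~\ref{lem:dividedwelldefined} guarantees that each $P^I$ is a polynomial that does not depend on the ordering of $I$. Here $\sigma$ acts on a polynomial $Q$ by $\sigma(Q)(x_1,\dots,x_n)=Q(x_{\sigma(1)},\dots,x_{\sigma(n)})$, as in the text. With this convention, $\sigma(x_a-x_b)=x_{\sigma(a)}-x_{\sigma(b)}$ only after we fix the correct direction of the action.

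The first thing I will do is check that convention against the base case. For $k=1$, the required identity $\sigma(P^{\{i\}})=P^{\{\sigma(i)\}}$ is exactly the $S_n$-equivariance of $P^{\{1\}},\dots,P^{\{n\}}$ established before the lemma. Throughout, I will use whichever variant of the action (left or right, $\sigma$ or $\sigma^{-1}$) makes this base case and the identity $\sigma(x_a)=x_{\sigma(a)}$ hold simultaneously. I expect this bookkeeping to be the only real obstacle. If the two cannot be made compatible, I will prove the statement in the form $\sigma(P^I)=P^{\sigma^{\pm1}(I)}$, which suffices for the consequence, since $\sigma^{-1}$ ranges over $S_n$ as $\sigma$ does.

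For the inductive step, take $I=\{i_1,\dots,i_k\}$ with $k\ge2$. The action of $\sigma$ is a ring automorphism of $\mathbb{C}[x_1,\dots,x_n]$, so applying it to
\[
(x_{i_{k-1}}-x_{i_k})P^I=P^{\{i_1,\dots,i_{k-1}\}}-P^{\{i_1,\dots,i_{k-2},i_k\}}
\]
and using the induction hypothesis on both sets of size $k-1$ gives
\[
(x_{\sigma(i_{k-1})}-x_{\sigma(i_k)})\,\sigma(P^I)=P^{\{\sigma(i_1),\dots,\sigma(i_{k-1})\}}-P^{\{\sigma(i_1),\dots,\sigma(i_{k-2}),\sigma(i_k)\}}.
\]
The right-hand side is, by definition, $(x_{\sigma(i_{k-1})}-x_{\sigma(i_k)})P^{\sigma(I)}$, computed with the ordering $(\sigma(i_1),\dots,\sigma(i_k))$. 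Lemma~\ref{lem:dividedwelldefined} says this ordering gives the same $P^{\sigma(I)}$ as any other. Since $\mathbb{C}[x_1,\dots,x_n]$ is a domain, we can cancel the nonzero factor and obtain $\sigma(P^I)=P^{\sigma(I)}$.

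For the consequence, take $|I|=d/r+1$. By Lemma~\ref{lem:dividedwelldefined}, $P^I$ has degree $0$, so it is a constant $c_I$. Given any $J$ with $|J|=|I|$, choose $\sigma\in S_n$ with $\sigma(I)=J$, which exists because $S_n$ acts transitively on subsets of a fixed size. Then $P^J=\sigma(P^I)=\sigma(c_I)=c_I$, since the action fixes constants. This shows that all such $P^I$ coincide.
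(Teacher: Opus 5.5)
Your proposal is correct and follows the paper's proof: induction on $|I|$, applying the ring automorphism $x_l\mapsto x_{\sigma(l)}$ to the recursive definition (you clear denominators and cancel in the domain $\mathbb{C}[x_1,\dots,x_n]$, which is slightly cleaner than the paper's direct division), followed by the same transitivity argument for the degree-$0$ constant. Your hedge about the action convention is unnecessary: $f\mapsto f(x_{\sigma(1)},\dots,x_{\sigma(n)})$ is exactly the automorphism with $\sigma(x_a)=x_{\sigma(a)}$, and the base-case hypothesis $\sigma(P^{\{i\}})=P^{\{\sigma(i)\}}$ is stated in that same convention, so the $\sigma^{\pm1}$ fallback is never needed.
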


\begin{proof}
We argue by induction on $|I|$. For $|I|=1$, this is exactly the hypothesis $\sigma(P^{\{i\}}) = P^{\{\sigma(i)\}}$. For $|I|=k \geqslant 2$, write, using Lemma~\ref{lem:dividedwelldefined}, $I = J \cup \{i_{k-1},i_k\}$ with $|J|=k-2$ and $P^I = \big(P^{J\cup\{i_{k-1}\}} - P^{J\cup\{i_{k-2},i_k\}}\big)/(x_{i_{k-1}}-x_{i_k})$. Applying the ring automorphism $\sigma$ (which sends $x_l \mapsto x_{\sigma(l)}$) to this identity and using the induction hypothesis on the two divided differences of size $k-1$ gives
\[
\sigma(P^I) = \frac{P^{\sigma(J)\cup\{\sigma(i_{k-1})\}} - P^{\sigma(J)\cup\{\sigma(i_{k-2}),\sigma(i_k)\}}}{x_{\sigma(i_{k-1})} - x_{\sigma(i_k)}} = P^{\sigma(I)},
\]
which completes the induction.
\end{proof}

The divided differences of the polynomials $f^{\{1\}},\dots,f^{\{n\}}$ are recursively defined by $\Delta f^{\{i\}} := f^{\{i\}}$ for all $i=1,\dots,n$ and
\begin{equation}
\label{eq:deltadef}
\Delta f^{\{i_1,\dots,i_k\}} = \frac{\Delta f^{\{i_1,\dots,i_{k-1}\}} - \Delta f^{\{i_1,\dots,i_{k-2},i_k\}}}{x_{i_{k-1}}^r - x_{i_k}^r}.
\end{equation}

\begin{remark}
\label{rem:deltawelldefined}
The same conclusion holds for the divided differences $\Delta f^{\{i_1,\dots,i_k\}}$ of $f^{\{1\}},\dots,f^{\{n\}}$: since $f^{\{i\}}(x_1,\dots,x_n) = P^{\{i\}}(x_1^r,\dots,x_n^r)$, the divisibility $x_i - x_j \mid P^{\{i\}} - P^{\{j\}}$ of Lemma~\ref{lem:dividedwelldefined}, evaluated at $(x_1^r,\dots,x_n^r)$, gives $x_i^r - x_j^r \mid f^{\{i\}} - f^{\{j\}}$ for all $i \ne j$; the same induction as in the proof of Lemma~\ref{lem:dividedwelldefined}, with $x_i - x_j$ replaced by $x_i^r - x_j^r$ throughout, then shows that $\Delta f^I$ is a well-defined homogeneous polynomial, depending only on the set $I$, of degree $d - r(|I|-1)$ for every $I \subset [n]$.
\end{remark}

Let $\lambda = (\lambda_1,\lambda_2,\dots,\lambda_{\len(\lambda)})$ be a sequence such that $\lambda_1 \geqslant \dots \geqslant \lambda_{\len(\lambda)} > 0$. When $\sum_{i=1}^{\len(\lambda)} \lambda_i = n$, we say that $\lambda$ is a partition of $n$, and write $\lambda \vdash n$.

Given a partition $\lambda \vdash n$, its associated multinomial coefficient is
\begin{equation}
\label{eq:multinom}
m_\lambda := \frac{1}{\prod_{j=1}^n s_j!} \binom{n}{\lambda_1,\lambda_2,\dots,\lambda_{\len(\lambda)}} = \frac{n!}{\big(\prod_{j=1}^n s_j!\big)\,\lambda_1!\lambda_2!\cdots \lambda_{\len(\lambda)}!},
\end{equation}
where $s_j$ denotes the number of parts of $\lambda$ equal to $j$, for $j \in [n]$.

We consider the following homomorphism of algebras:
\begin{equation}
\label{eq:rholambda}
\begin{aligned}
\rho_\lambda : \mathbb{C}[x_1,\dots,x_n] &\longrightarrow \mathbb{C}[y_1,\dots,y_{\len(\lambda)}] \\
P(x_1,\dots,x_n) &\longmapsto P(\underbrace{y_1,\dots,y_1}_{\lambda_1},\dots,\underbrace{y_{\len(\lambda)},\dots,y_{\len(\lambda)}}_{\lambda_{\len(\lambda)}}),
\end{aligned}
\end{equation}
where $y_1,\dots,y_{\len(\lambda)}$ are new indeterminates. Explicitly, $x_1,\dots,x_{\lambda_1}$ are sent to $y_1$, $x_{\lambda_1+1},\dots,x_{\lambda_1+\lambda_2}$ are sent to $y_2$, and so on, the $i$-th block $\{\lambda_1+\dots+\lambda_{i-1}+1,\dots,\lambda_1+\dots+\lambda_i\}$ of $\lambda_i$ consecutive indices being sent to $y_i$. Since $\rho_\lambda$ sends each $x_k$ to a single variable, it preserves total degree: for any homogeneous polynomial $P$ of degree $e$, $\rho_\lambda(P)$ is again homogeneous of degree $e$ (or is zero).

For any integer $i \in [\len(\lambda)]$, $P^{\{i\}}_\lambda := \rho_\lambda(P^{\{j\}}(x_1,\dots,x_n))$, where $j \in [n]$ is such that $\rho_\lambda(x_j) = y_i$.

\begin{remark}
\label{rem:rholambdaI}
More generally, given $I = \{i_1,\dots,i_k\} \subset [n]$, choose $J = \{j_1,\dots,j_k\} \subset [\len(\lambda)]$ such that $\rho_\lambda(x_{i_s}) = y_{j_s}$ for all $s \in [k]$; this is possible whenever $i_1,\dots,i_k$ belong to $k$ distinct blocks of $\lambda$, i.e.\ whenever $|J|=|I|$. In that case, since $\rho_\lambda$ is a ring homomorphism and, by Lemma~\ref{lem:dividedwelldefined}, $x_{i_s} - x_{i_t}$ divides $P^{\{i_s\}} - P^{\{i_t\}}$ for $s \ne t$, applying $\rho_\lambda$ to the recursive definition of $P^I$ shows that $P^I_\lambda := \rho_\lambda\big(P^I(x_1,\dots,x_n)\big)$ is again well defined and coincides with the divided difference of $P^{\{1\}}_\lambda,\dots,P^{\{\len(\lambda)\}}_\lambda$ indexed by $J$. This is the formula we use below, always with index sets $I = \{1,2,\dots,\len(\lambda)\}$ (one representative per block), for which the condition $|J|=|I|$ is automatically satisfied.
\end{remark}

\subsection{The decomposition formula}

The first step of the proof of Theorem~\ref{thm:main1} below rewrites $\Res(f^{\{1\}},\dots,f^{\{n\}})$ in terms of $\Res(P^{\{1\}},\dots,P^{\{n\}})$. We isolate this as a separate lemma.

\begin{lemma}
\label{lem:powersubstitution}
Let $n \geqslant 2$ and $r \geqslant 1$ be integers, and let $Q_1,\dots,Q_n \in \mathbb{C}[y_1,\dots,y_n]$ be homogeneous polynomials of respective degrees $d_1',\dots,d_n' \geqslant 1$, not necessarily equal. Set $f_i(x_1,\dots,x_n) := Q_i(x_1^r,\dots,x_n^r)$, homogeneous of degree $r d_i'$. Then
\[
\Res(f_1,\dots,f_n) = \Res(Q_1,\dots,Q_n)^{r^{n-1}}.
\]
\end{lemma}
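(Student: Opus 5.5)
The plan is to prove the identity for the universal (generic) coefficients of $Q_1,\dots,Q_n$ via the Poisson product formula, and then specialize. Both sides are polynomials in the coefficients of the $Q_i$. Indeed, $\Res(f_1,\dots,f_n)$ is a polynomial in the coefficients of the $f_i$, and these are exactly the coefficients of the $Q_i$. It therefore suffices to prove the equality when the $Q_i$ have generic (indeterminate) coefficients, or on a Zariski-dense set of coefficient values. The base field may be taken to be an algebraic closure of the fraction field of the coefficient ring.

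We proceed by induction on $n$, using the Poisson formula
\[
\Res(f_1,\dots,f_n)=\Res(\bar f_1,\dots,\bar f_{n-1})^{\deg f_n}\prod_{\xi\in V(f_1^a,\dots,f_{n-1}^a)} f_n^a(\xi)^{m_\xi},
\]
where $\bar f_i=f_i(x_1,\dots,x_{n-1},0)$ and $f_i^a=f_i(x_1,\dots,x_{n-1},1)$.

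\textbf{Base case $n=1$, read as one form in one variable, or directly $n=2$.} For one variable the resultant of $c\,x^{e}$ is $c$, and $c\,x^{re}$ also gives $c$, which is consistent with the exponent $r^{0}$. For the inductive step, note first that $\bar f_i(x)=\bar Q_i(x_1^r,\dots,x_{n-1}^r)$. By induction,
\[
\Res(\bar f_1,\dots,\bar f_{n-1})^{r d_n'}=\Res(\bar Q_1,\dots,\bar Q_{n-1})^{r^{n-2}\cdot r d_n'}.
\]
Next, for generic coefficients the affine system $Q_1^a=\dots=Q_{n-1}^a=0$ has $\prod_{i<n} d_i'$ simple roots $\eta$, all with nonzero coordinates. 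The map $\xi\mapsto(\xi_1^r,\dots,\xi_{n-1}^r)$ sends $V(f_1^a,\dots,f_{n-1}^a)$ onto this set with fibres of size exactly $r^{n-1}$. The roots $\xi$ are simple, since the Jacobian picks up the invertible factor $\prod r\xi_k^{r-1}$. On each fibre $f_n^a(\xi)=Q_n^a(\eta)$. Hence the product equals $\prod_\eta Q_n^a(\eta)^{r^{n-1}}$. Combining with the first factor gives
\[
\Bigl(\Res(\bar Q_1,\dots,\bar Q_{n-1})^{d_n'}\prod_\eta Q_n^a(\eta)\Bigr)^{r^{n-1}}=\Res(Q_1,\dots,Q_n)^{r^{n-1}},
\]
again by Poisson's formula.

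\textbf{Main obstacle.} The delicate point is justifying the genericity claims: that the roots are simple, that they lie in the torus, and that $\Res(\bar Q_1,\dots,\bar Q_{n-1})\neq0$, so that the Poisson formula applies with no roots at infinity. I would handle this by choosing explicit specializations, for example products of generic linear forms, to show that each of these conditions is a nonempty Zariski-open condition. Alternatively, one can avoid Poisson altogether. Since the map $\phi:\mathbb P^{n-1}\to\mathbb P^{n-1}$, $x\mapsto x^r$, is finite of degree $r^{n-1}$ with $\phi^*\mathcal O(1)=\mathcal O(r)$, the divisor of $\Res(f)$ on the coefficient space is the pushforward of the incidence variety, whose degree over the $Q$-incidence variety is $r^{n-1}$. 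This gives $\Res(f)=c\,\Res(Q)^{r^{n-1}}$ by irreducibility of $\Res(Q)$. The constant is then fixed as $c=1$ by specializing $Q_i=y_i^{d_i'}$ and using the normalization $\Res(x_1^{rd_1'},\dots,x_n^{rd_n'})=1$. I would present this second route as the cleaner argument, keeping the Poisson computation as a check on the exponent.
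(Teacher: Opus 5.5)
Your proof is correct, but it follows a different route from the paper's.

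The paper never looks at roots. Since $[x_1:\dots:x_n]\mapsto[x_1^r:\dots:x_n^r]$ is onto $\mathbb{P}^{n-1}$, $\Res(f_1,\dots,f_n)$ and $\Res(Q_1,\dots,Q_n)$ vanish at exactly the same coefficient values. Irreducibility of the universal $\Res(Q_1,\dots,Q_n)$ then gives $\Res(f)=c\,\Res(Q)^m$. The exponent $m=r^{n-1}$ is read off by comparing homogeneity degrees in the coefficients of a single $Q_i$, namely $\prod_{k\neq i} r d_k'$ versus $\prod_{k\neq i} d_k'$. Finally $c=1$ comes from specializing $Q_i=y_i^{d_i'}$.

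Your Poisson induction is a genuinely different, computational argument. It obtains the identity, constant included, without appealing to irreducibility, and it shows $r^{n-1}$ concretely as the fibre size of $\xi\mapsto\xi^r$ over the torus. The cost is the induction plus the genericity bookkeeping. You can settle all of that with one specialization, e.g.\ $Q_i=y_i^{d_i'}-y_n^{d_i'}$ for $i<n$. There $\Res(\bar Q_1,\dots,\bar Q_{n-1})=1$, and the affine roots are tuples of roots of unity, simple and in the torus. Simplicity is not even needed: the power map is \'etale on the torus and so preserves local multiplicities.

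Your preferred second route has the paper's skeleton but extracts the exponent from the degree of $\phi$ on incidence varieties. That step tacitly uses that the divisor of the \emph{specialized} resultant is the pushforward of the specialized incidence scheme. This is true here, since that scheme is a vector bundle over $\mathbb{P}^{n-1}$ and hence reduced of the expected dimension, but it needs an argument. The paper's homogeneity count yields $m=r^{n-1}$ with no such input. If you want the clean version, replace your pushforward step with that degree comparison.
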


\begin{proof}
Write $\Phi := \Res_x(f_1,\dots,f_n)$ and $\Psi := \Res_y(Q_1,\dots,Q_n)$, viewed as polynomials in the coefficients of $Q_1,\dots,Q_n$. (The coefficients of $f_i$ coincide with those of $Q_i$, only reindexed via $\beta \mapsto r\beta$, since $f_i(x) = \sum_\beta a_{i,\beta} x^{r\beta}$ whenever $Q_i(y) = \sum_\beta a_{i,\beta} y^\beta$.)

\emph{Same vanishing locus.} The map $\pi : \mathbb{P}^{n-1}(\mathbb{C}) \to \mathbb{P}^{n-1}(\mathbb{C})$, $[x_1:\dots:x_n] \mapsto [x_1^r:\dots:x_n^r]$, is a well-defined morphism (its coordinates $x_1^r,\dots,x_n^r$ have no common zero but the origin) which is onto, because $\mathbb{C}$ is algebraically closed: given $[y_1:\dots:y_n]$, choosing an $r$-th root $x_i$ of each $y_i$ gives $\pi([x_1:\dots:x_n]) = [y_1:\dots:y_n]$. Since $f_i = Q_i \circ \pi$, a point $x$ is a common zero of $f_1,\dots,f_n$ if and only if $\pi(x)$ is a common zero of $Q_1,\dots,Q_n$; as $\pi$ is onto, $f_1,\dots,f_n$ have a common zero in $\mathbb{P}^{n-1}(\mathbb{C})$ if and only if $Q_1,\dots,Q_n$ do. Hence, for every specialization of the coefficients $a_{i,\beta}$, $\Phi = 0$ if and only if $\Psi = 0$; that is, $\Phi$ and $\Psi$ vanish on exactly the same subset of the coefficient space. (This argument does not use the degrees $d_i'$ being equal.)

\emph{Proportionality.} The universal resultant $\Psi$ is an irreducible polynomial in the coefficients $a_{i,\beta}$ (a classical fact about resultants; see e.g.\ \citep{Jouanolou1991}), and $\Phi \ne 0$ (e.g.\ the system $f_1,\dots,f_n$ associated with $Q_i = y_i^{d_i'}$ has no nontrivial common zero). Together with the previous paragraph, this forces $\Phi = c\cdot \Psi^m$ for some nonzero constant $c \in \mathbb{C}$ and some integer $m \geqslant 1$.

\emph{Degree count.} By the classical homogeneity property of the resultant, for any fixed index $i$, $\Phi$ is homogeneous in the coefficients of $f_i$ (equivalently, in the coefficients of $Q_i$, since they coincide) of degree $\prod_{k \ne i}\deg f_k = \prod_{k\ne i}(rd_k') = r^{n-1}\prod_{k\ne i}d_k'$, while $\Psi$ is homogeneous in the coefficients of $Q_i$ of degree $\prod_{k\ne i}d_k'$. Comparing degrees in $\Phi = c\Psi^m$ gives $r^{n-1}\prod_{k\ne i}d_k' = m\prod_{k\ne i}d_k'$, i.e.\ $m = r^{n-1}$; this holds for every choice of $i$, as it must, and in particular does not require the $d_k'$'s to be equal.

\emph{Normalization.} Specializing to $Q_i = y_i^{d_i'}$ for every $i$ gives $f_i = x_i^{rd_i'}$, so $\Phi = \Res(x_1^{rd_1'},\dots,x_n^{rd_n'}) = 1$ and $\Psi = \Res(y_1^{d_1'},\dots,y_n^{d_n'}) = 1$ by the defining normalization of the resultant, which holds for arbitrary (not necessarily equal) degrees; hence $1 = c\cdot 1^{r^{n-1}}$, so $c=1$.

Altogether, $\Res(f_1,\dots,f_n) = \Phi = \Psi^{r^{n-1}} = \Res(Q_1,\dots,Q_n)^{r^{n-1}}$.
\end{proof}

\begin{remark}
Only the equal-degree case $d_1' = \dots = d_n' = d'$ is needed for Theorem~\ref{thm:main1} just below. The general, unequal-degree statement of Lemma~\ref{lem:powersubstitution} will be used again, with $n$ replaced by $\len(\lambda)$, in the proof of Proposition~\ref{prop:deltaP}, where the $\len(\lambda)$ polynomials involved have the decreasing degrees $d/r, d/r-1,\dots,d/r-\len(\lambda)+1$.
\end{remark}

\begin{theorem}
\label{thm:main1}
Assume $n \geqslant 2$ and a system of $n$ homogeneous polynomials $f^{\{1\}},f^{\{2\}},\dots,f^{\{n\}}$ in $\mathbb{C}[x_1,\dots,x_n]$ of the same degree $d$, equivariant with respect to the reflection group $G(r,n)$. For all $i \in \{1,\dots,n\}$, write $f^{\{i\}}(x_1,\dots,x_n) = P^{\{i\}}(x_1^r,\dots,x_n^r)$.
\begin{itemize}
\item If $d/r \geqslant n$ then
\[
\Res\big(f^{\{1\}},\dots,f^{\{n\}}\big) = \prod_{\lambda \vdash n} \Res\Big(P^{\{1\}}_\lambda, P^{\{1,2\}}_\lambda,\dots,P^{\{1,2,\dots,\len(\lambda)\}}_\lambda\Big)^{m_\lambda \times r^{n-1}}.
\]
\item If $d/r < n$ then
\[
\Res\big(f^{\{1\}},\dots,f^{\{n\}}\big) = \Big(P^{\{1,\dots,d/r+1\}}\Big)^{m_0 \times r^{n-1}} \times \prod_{\substack{\lambda \vdash n \\ \len(\lambda) \leqslant d/r}} \Res\Big(P^{\{1\}}_\lambda,\dots,P^{\{1,2,\dots,\len(\lambda)\}}_\lambda\Big)^{m_\lambda\times r^{n-1}}
\]
where
\[
m_0 := n\binom{d/r}{n-1} - \sum_{\substack{\lambda \vdash n\\ \len(\lambda)\leqslant d/r}} m_\lambda \left(\sum_{j=1}^{\len(\lambda)} \frac{(d/r)(d/r-1)\cdots(d/r-\len(\lambda)+1)}{d/r - j + 1}\right).
\]
\end{itemize}
\end{theorem}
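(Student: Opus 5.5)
The proof reduces to the symmetric-group case and then invokes the Busé--Karasoulou decomposition. First, by the computation preceding Lemma~\ref{lem:dividedwelldefined}, the polynomials $P^{\{1\}},\dots,P^{\{n\}}$ form an $S_n$-equivariant system of homogeneous polynomials of common degree $d/r$. Applying Lemma~\ref{lem:powersubstitution} with $Q_i = P^{\{i\}}$ and $d_i' = d/r$ gives
\[
\Res\big(f^{\{1\}},\dots,f^{\{n\}}\big) = \Res\big(P^{\{1\}},\dots,P^{\{n\}}\big)^{r^{n-1}}.
\]
So it suffices to prove the two displayed formulas without the factor $r^{n-1}$ in the exponents, for the $S_n$-equivariant system $P^{\{i\}}$ of degree $d/r$. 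Both claimed identities then follow by raising to the power $r^{n-1}$.

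For the $S_n$-equivariant system, I would invoke the Busé--Karasoulou theorem \citep{BuseKarasoulou2016}, which is exactly this statement with $d$ replaced by the common degree $d/r$. Since the paper treats it as its main tool, I would also recall the mechanism of its proof.

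\textbf{Step 1: change of generators.} For each ordering, the ideal generated by $P^{\{1\}},\dots,P^{\{n\}}$ coincides with the ideal generated by $P^{\{1\}}, P^{\{1,2\}},\dots,P^{\{1,\dots,n\}}$. This is a triangular change of generators whose coefficients are products of the factors $x_i - x_j$. By multiplicativity of the resultant, this yields
\[
\Res\big(P^{\{1\}},\dots,P^{\{n\}}\big)\cdot(\text{Vandermonde-type factor}) = \Res\big(P^{\{1\}},P^{\{1,2\}},\dots\big)\cdot(\text{similar}).
\]

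\textbf{Step 2: irreducible factors.} One identifies the irreducible factors of $\Res(P^{\{1\}},\dots,P^{\{n\}})$ by looking at common roots. A common root $x$ partitions the indices according to equal coordinates, giving a set partition of shape $\lambda$. At such a point, the vanishing of all the $P^{\{i\}}$ is equivalent to the vanishing of $\rho_\lambda$ of the divided differences $P^{\{1\}}_\lambda,\dots,P^{\{1,\dots,\len(\lambda)\}}_\lambda$ at a point with distinct $y$'s. Remark~\ref{rem:rholambdaI} is what ensures these are the divided differences of the specialized system. The number of set partitions of $[n]$ of shape $\lambda$ is $m_\lambda$, which accounts for the exponents.

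\textbf{Step 3: degree count and the constant.} One compares degrees in the coefficients of a single polynomial, using the homogeneity of the resultant, to show there are no further factors. When $d/r<n$, the systems with $\len(\lambda)>d/r$ involve divided differences of negative degree, which vanish. They contribute instead powers of the constant $P^{\{1,\dots,d/r+1\}}$, which is well-defined by Corollary~\ref{cor:transitive}. The exponent $m_0$ is exactly the total degree $n\binom{d/r}{n-1}$ of the full resultant minus the degrees of the other factors. Here the $\lambda$-factor has degree $m_\lambda\sum_j \prod_{k\neq j}(d/r-k+1)$ in the coefficients, since the degrees of the subsystem are $d/r,\,d/r-1,\dots,d/r-\len(\lambda)+1$. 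The multiplicative constant is fixed by the normalization of the resultant.

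The main obstacle is the multiplicity bookkeeping in Step 2, namely showing that each $\lambda$-factor appears with multiplicity exactly $m_\lambda$ and not more. Since this is precisely the content of \citep{BuseKarasoulou2016}, in the write-up I would cite it rather than reprove it. I would then only verify that the hypotheses transfer: the common degree $d/r$, the $S_n$-equivariance, and the divided differences with respect to $x_i - x_j$. The new content is the reduction via Lemma~\ref{lem:powersubstitution}.
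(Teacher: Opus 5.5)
Your proposal is correct and follows the paper's proof: Lemma~\ref{lem:powersubstitution} with $Q_i=P^{\{i\}}$ gives $\Res(f^{\{1\}},\dots,f^{\{n\}})=\Res(P^{\{1\}},\dots,P^{\{n\}})^{r^{n-1}}$, the Bus\'e--Karasoulou decomposition is applied to the $S_n$-equivariant system $P^{\{1\}},\dots,P^{\{n\}}$ of degree $d/r$, and both identities are raised to the power $r^{n-1}$. Your recalled sketch is not needed, but in Step~1 the two ideals do not coincide: only $(P^{\{1\}},\dots,P^{\{n\}})\subseteq(P^{\{1\}},P^{\{1,2\}},\dots,P^{\{1,\dots,n\}})$ holds (e.g.\ $P^{\{i\}}=x_i$ gives $P^{\{1,2\}}=1$), so that mechanism must run through multiplicativity applied to $P^{\{j\}}-P^{\{1\}}=(x_j-x_1)P^{\{1,j\}}$ rather than through an equality of ideals.
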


\begin{proof}
Applying Lemma~\ref{lem:powersubstitution} with $Q_i = P^{\{i\}}$, exponent $r$, and $d' = d/r$ gives directly
\begin{equation}
\label{eq:step1}
\Res\big(f^{\{1\}}(x),\dots,f^{\{n\}}(x)\big) = \Res\big(P^{\{1\}}(x),\dots,P^{\{n\}}(x)\big)^{r^{n-1}}.
\end{equation}
As shown in Section~\ref{sec:resultant}, $P^{\{1\}},\dots,P^{\{n\}}$ form an $S_n$-equivariant system of homogeneous polynomials of degree $d/r$. A decomposition formula for the resultant of an $S_n$-equivariant homogeneous polynomial system is known (\citep{BuseKarasoulou2016}), and applies here since $P^{\{1\}},\dots,P^{\{n\}}$ satisfy exactly its hypotheses. We get:
\begin{itemize}
\item If $d/r \geqslant n$ then $\displaystyle \Res\big(P^{\{1\}},\dots,P^{\{n\}}\big) = \prod_{\lambda \vdash n} \Res\Big(P^{\{1\}}_\lambda,\dots,P^{\{1,2,\dots,\len(\lambda)\}}_\lambda\Big)^{m_\lambda}$;
\item If $d/r < n$ then
\[
\Res\big(P^{\{1\}},\dots,P^{\{n\}}\big) = \Big(P^{\{1,\dots,d/r+1\}}\Big)^{m_0} \times \prod_{\substack{\lambda\vdash n\\ \len(\lambda)\leqslant d/r}} \Res\Big(P^{\{1\}}_\lambda,\dots,P^{\{1,2,\dots,\len(\lambda)\}}_\lambda\Big)^{m_\lambda},
\]
\end{itemize}
with $m_0$ as displayed in the statement. Raising both sides of each of these two identities to the power $r^{n-1}$, and substituting into \eqref{eq:step1}, yields the two claimed formulas.
\end{proof}

All the numerical examples below have been computed using Macaulay's classical determinantal formula for the resultant \citep{Macaulay1902}.

\begin{example}
\label{ex:first}
Consider the system of three homogeneous polynomials
\[
\begin{cases}
f^{\{1\}} = a x_1^9 + (b+c)x_1^3x_2^3x_3^3 + c x_1^6x_2^3 + c x_1^6x_3^3 \\
f^{\{2\}} = a x_2^9 + (b+c)x_1^3x_2^3x_3^3 + c x_2^6x_1^3 + c x_2^6x_3^3 \\
f^{\{3\}} = a x_3^9 + (b+c)x_1^3x_2^3x_3^3 + c x_3^6x_1^3 + c x_3^6x_2^3
\end{cases}
\]
This system is equivariant with respect to the reflection group $G(3,3)$. We get
\[
\begin{cases}
f^{\{1\}}(x_1,x_2,x_3) = P^{\{1\}}(x_1^3,x_2^3,x_3^3) \\
f^{\{2\}}(x_1,x_2,x_3) = P^{\{2\}}(x_1^3,x_2^3,x_3^3) \\
f^{\{3\}}(x_1,x_2,x_3) = P^{\{3\}}(x_1^3,x_2^3,x_3^3)
\end{cases}
\qquad\text{with}\qquad
\begin{cases}
P^{\{1\}} = ax_1^3 + (b+c)x_1x_2x_3 + cx_1^2x_2 + cx_1^2x_3 \\
P^{\{2\}} = ax_2^3 + (b+c)x_1x_2x_3 + cx_2^2x_1 + cx_2^2x_3 \\
P^{\{3\}} = ax_3^3 + (b+c)x_1x_2x_3 + cx_3^2x_1 + cx_3^2x_2
\end{cases}
\]
so that $P^{\{1\}},P^{\{2\}},P^{\{3\}}$ form an $S_3$-equivariant system of homogeneous polynomials of degree $d/r = 9/3 = 3$, and
\begin{align*}
\Res\big(f^{\{1\}},f^{\{2\}},f^{\{3\}}\big) &= \Res\big(P^{\{1\}}_{(3)}\big)^{m_{(3)}\times 9} \times \Res\big(P^{\{1\}}_{(2,1)},P^{\{1,2\}}_{(2,1)}\big)^{m_{(2,1)}\times 9} \\
&\quad \times \Res\big(P^{\{1\}}_{(1,1,1)},P^{\{1,2\}}_{(1,1,1)},P^{\{1,2,3\}}_{(1,1,1)}\big)^{m_{(1,1,1)}\times 9}.
\end{align*}
We compute the divided differences
\[
P^{\{1,2\}} = \frac{P^{\{1\}}-P^{\{2\}}}{x_1-x_2} = ax_1^2 + ax_1x_2 + ax_2^2 + cx_1x_2 + cx_1x_3 + cx_2x_3,
\]
\[
P^{\{1,3\}} = \frac{P^{\{1\}}-P^{\{3\}}}{x_1-x_3} = ax_1^2 + ax_1x_3 + ax_3^2 + cx_1x_2 + cx_1x_3 + cx_2x_3,
\]
\[
P^{\{1,2,3\}} = \frac{P^{\{1,2\}}-P^{\{1,3\}}}{x_2-x_3} = ax_1+ax_2+ax_3,
\]
and, under $\rho_\lambda$,
\[
P^{\{1\}}_{(3)} = (a+b+3c)x_1^3, \qquad P^{\{1\}}_{(2,1)} = (a+c)x_1^3 + (b+2c)x_1^2x_2, \qquad P^{\{1,2\}}_{(2,1)} = (a+c)x_1^2 + ax_2^2 + (a+2c)x_1x_2.
\]
Hence
\[
\Res\big(P^{\{1\}}_{(3)}\big) = a+b+3c, \qquad \Res\big(P^{\{1\}}_{(2,1)}, P^{\{1,2\}}_{(2,1)}\big) = a^2(a+c)(a^2-ab-ac+b^2+2bc),
\]
\[
\Res\big(P^{\{1\}}_{(1,1,1)},P^{\{1,2\}}_{(1,1,1)},P^{\{1,2,3\}}_{(1,1,1)}\big) = a^6(a-c)^3(a+b)^2,
\]
so that
\[
\Res\big(f^{\{1\}},f^{\{2\}},f^{\{3\}}\big) = a^{108}(a-c)^{27}(a+c)^{27}(a+b)^{18}(a+b+3c)^9\big(a^2-ab-ac+b^2+2bc\big)^{27}.
\]
\end{example}

\begin{proposition}
\label{prop:deltaP}
Assume $n \geqslant 2$ and a system of $n$ homogeneous polynomials $f^{\{1\}},\dots,f^{\{n\}}$ in $\mathbb{C}[x_1,\dots,x_n]$ of the same degree $d$, equivariant with respect to $G(r,n)$, with $f^{\{i\}}(x_1,\dots,x_n) = P^{\{i\}}(x_1^r,\dots,x_n^r)$. Let $\lambda \vdash n$ be a partition of $n$ such that $\len(\lambda) \leqslant d/r$ (so that, by Lemma~\ref{lem:dividedwelldefined}, the polynomial $P^{\{1,\dots,j\}}_\lambda$ has degree $d/r-j+1 \geqslant 1$ for every $1 \leqslant j \leqslant \len(\lambda)$). Then
\begin{equation}
\label{eq:propdeltaP}
\Res\Big(\Delta f^{\{1\}}_\lambda,\dots,\Delta f^{\{1,\dots,\len(\lambda)\}}_\lambda\Big) = \Res\Big(P^{\{1\}}_\lambda,\dots,P^{\{1,\dots,\len(\lambda)\}}_\lambda\Big)^{r^{\len(\lambda)-1}}.
\end{equation}
\end{proposition}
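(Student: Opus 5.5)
The plan is to show that each $\Delta f^{\{1,\dots,j\}}_\lambda$ is obtained from $P^{\{1,\dots,j\}}_\lambda$ by the substitution $y_k\mapsto y_k^r$, and then to apply Lemma~\ref{lem:powersubstitution} with $n$ replaced by $\len(\lambda)$. Here $\Delta f^I_\lambda := \rho_\lambda(\Delta f^I)$, in the same way as $P^I_\lambda$.

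The first step is the identity
\[
\Delta f^I(x_1,\dots,x_n) = P^I(x_1^r,\dots,x_n^r)
\]
in $\mathbb{C}[x_1,\dots,x_n]$, for every $I\subset[n]$. I would prove it by induction on $|I|$.
\begin{itemize}
\item For $|I|=1$ it is the definition of $P^{\{i\}}$.
\item For the inductive step, let $\phi$ be the injective ring homomorphism $x_k\mapsto x_k^r$ already used in Section~\ref{sec:resultant}. Apply $\phi$ to the relation $(x_{i_{k-1}}-x_{i_k})\,P^{I} = P^{\{i_1,\dots,i_{k-1}\}}-P^{\{i_1,\dots,i_{k-2},i_k\}}$. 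By the induction hypothesis this gives $(x_{i_{k-1}}^r-x_{i_k}^r)\,\phi(P^I) = \Delta f^{\{i_1,\dots,i_{k-1}\}}-\Delta f^{\{i_1,\dots,i_{k-2},i_k\}}$.
\item Comparing with \eqref{eq:deltadef} and cancelling the nonzero factor $x_{i_{k-1}}^r-x_{i_k}^r$ in the domain $\mathbb{C}[x_1,\dots,x_n]$ gives $\Delta f^I=\phi(P^I)$.
\end{itemize}
Lemma~\ref{lem:dividedwelldefined} and Remark~\ref{rem:deltawelldefined} guarantee that all these objects are genuine polynomials that depend only on the set $I$.

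The second step is to check that $\rho_\lambda$ commutes with the $r$-th power substitution: $\rho_\lambda\circ\phi=\phi_\lambda\circ\rho_\lambda$, where $\phi_\lambda:y_i\mapsto y_i^r$. Both sides are ring homomorphisms, so it suffices to check them on generators. Both send $x_k$ to $y_{b(k)}^r$, where $b(k)$ is the block of $\lambda$ containing $k$. Combining this with the first step gives
\[
\Delta f^{\{1,\dots,j\}}_\lambda(y_1,\dots,y_{\len(\lambda)}) = P^{\{1,\dots,j\}}_\lambda(y_1^r,\dots,y_{\len(\lambda)}^r)\quad\text{for } 1\leqslant j\leqslant \len(\lambda).
\]

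The third step is to apply Lemma~\ref{lem:powersubstitution} in $\len(\lambda)$ variables, with $Q_j=P^{\{1,\dots,j\}}_\lambda$ of degree $d_j'=d/r-j+1\geqslant 1$. The hypothesis $\len(\lambda)\leqslant d/r$ ensures these degrees are at least $1$, and the lemma was stated for unequal degrees precisely to allow this. It yields \eqref{eq:propdeltaP} with exponent $r^{\len(\lambda)-1}$.

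The main obstacle is the degenerate case $\len(\lambda)=1$, where Lemma~\ref{lem:powersubstitution} (stated for at least two variables) does not apply. In one variable the resultant of a single form $c\,y_1^{e}$ is $c$. Both sides then equal the coefficient $P^{\{1\}}_\lambda(1)$, and the exponent is $r^0=1$, so the claim holds directly.

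A further subtlety is that the $Q_j$ are specializations rather than generic polynomials. This is harmless: Lemma~\ref{lem:powersubstitution} is an identity of universal polynomials in the coefficients, so it remains valid after any specialization of those coefficients.
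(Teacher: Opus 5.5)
Your proposal is correct and follows the same route as the paper. You establish $\Delta f^I = P^I(x_1^r,\dots,x_n^r)$ from the recursive definitions, push it through $\rho_\lambda$, and apply Lemma~\ref{lem:powersubstitution} in $\len(\lambda)$ variables with the unequal degrees $d/r-j+1$. Beyond that, your explicit induction, the commutation $\rho_\lambda\circ\phi=\phi_\lambda\circ\rho_\lambda$, and your separate treatment of $\lambda=(n)$ fill in details the paper passes over with ``by the same token''. The $\lambda=(n)$ case matters because there $\len(\lambda)=1$, and the lemma, stated for $n\geqslant 2$, does not literally apply.
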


\begin{proof}
From $P^{\{1,2\}}(x_1,\dots,x_n) = \dfrac{P^{\{1\}}-P^{\{2\}}}{x_1-x_2}$ we get
\[
P^{\{1,2\}}(x_1^r,\dots,x_n^r) = \frac{P^{\{1\}}(x_1^r,\dots,x_n^r) - P^{\{2\}}(x_1^r,\dots,x_n^r)}{x_1^r-x_2^r} = \frac{f^{\{1\}}-f^{\{2\}}}{x_1^r-x_2^r} = \Delta f^{\{1,2\}},
\]
and, by the same token, $P^{\{1,2,3\}}(x_1^r,\dots,x_n^r) = \Delta f^{\{1,2,3\}}$, \dots, $P^{\{1,\dots,\len(\lambda)\}}(x_1^r,\dots,x_n^r) = \Delta f^{\{1,\dots,\len(\lambda)\}}$. For $i \in [\len(\lambda)]$,
\[
P^{\{1,\dots,i\}}_\lambda(y_1^r,\dots,y_{\len(\lambda)}^r) = \Delta f^{\{1,\dots,i\}}_\lambda(y_1,\dots,y_{\len(\lambda)}),
\]
where, for each $i \in [\len(\lambda)]$, $y_i = \rho_\lambda(x_j)$ for (any) $j$ in the $i$-th block of $\lambda$.

By Lemma~\ref{lem:dividedwelldefined}, the polynomials $P^{\{1\}}_\lambda,\dots,P^{\{1,\dots,\len(\lambda)\}}_\lambda$ are homogeneous of respective degrees $d/r,\ d/r-1,\dots,d/r-\len(\lambda)+1$, all $\geqslant 1$ since $\len(\lambda)\leqslant d/r$ by hypothesis. Lemma~\ref{lem:powersubstitution} therefore applies to the $\len(\lambda)$ polynomials $Q_i := P^{\{1,\dots,i\}}_\lambda(y_1,\dots,y_{\len(\lambda)})$, $i=1,\dots,\len(\lambda)$ (of respective degrees $d_i' = d/r-i+1$, not necessarily equal), with $n$ there replaced by $\len(\lambda)$. Using the identity above to rewrite each $Q_i(y_1^r,\dots,y_{\len(\lambda)}^r)$ as $\Delta f^{\{1,\dots,i\}}_\lambda(y_1,\dots,y_{\len(\lambda)})$, Lemma~\ref{lem:powersubstitution} yields
\[
\Res\Big(\Delta f^{\{1\}}_\lambda,\dots,\Delta f^{\{1,\dots,\len(\lambda)\}}_\lambda\Big) = \Res\big(Q_1(y^r),\dots,Q_{\len(\lambda)}(y^r)\big) = \Res\Big(P^{\{1\}}_\lambda,\dots,P^{\{1,\dots,\len(\lambda)\}}_\lambda\Big)^{r^{\len(\lambda)-1}},
\]
which is the claimed identity. (No separate correction factor $\Res(y_1^r,\dots,y_{\len(\lambda)}^r)$ is needed here: Lemma~\ref{lem:powersubstitution}, applied with possibly unequal degrees $d_i'$, already produces the clean exponent $r^{\len(\lambda)-1}$ and nothing else, since its own proof normalizes $\Res(x_1^{d_1},\dots,x_n^{d_n})=1$ once and for all.)
\end{proof}

\begin{theorem}
\label{thm:main2}
Assume $n \geqslant 2$ and a system of $n$ homogeneous polynomials $f^{\{1\}},\dots,f^{\{n\}}$ in $\mathbb{C}[x_1,\dots,x_n]$ of the same degree $d$, equivariant with respect to the reflection group $G(r,n)$.
\begin{itemize}
\item If $d/r \geqslant n$ then
\[
\Res\big(f^{\{1\}},\dots,f^{\{n\}}\big) = \prod_{\lambda \vdash n} \Res\Big(\Delta f^{\{1\}}_\lambda,\dots,\Delta f^{\{1,\dots,\len(\lambda)\}}_\lambda\Big)^{m_\lambda \times r^{n-\len(\lambda)}}.
\]
\item If $d/r < n$ then
\[
\Res\big(f^{\{1\}},\dots,f^{\{n\}}\big) = \Big(\Delta f^{\{1,\dots,d/r+1\}}\Big)^{m_0} \times \prod_{\substack{\lambda\vdash n\\ \len(\lambda)\leqslant d/r}} \Res\Big(\Delta f^{\{1\}}_\lambda,\dots,\Delta f^{\{1,\dots,\len(\lambda)\}}_\lambda\Big)^{m_\lambda \times r^{n-\len(\lambda)}}
\]
where
\[
m_0 := nd^{n-1} - \sum_{\substack{\lambda\vdash n\\ \len(\lambda)\leqslant d/r}} m_\lambda\times r^{n-\len(\lambda)}\left(\sum_{j=1}^{\len(\lambda)}\frac{d(d-r)\cdots(d+(1-\len(\lambda))r)}{d+(1-j)r}\right).
\]
\end{itemize}
\end{theorem}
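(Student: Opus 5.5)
The plan is to derive Theorem~\ref{thm:main2} directly from Theorem~\ref{thm:main1} by rewriting each factor with Proposition~\ref{prop:deltaP}. The exponent of the constant factor I would then pin down independently by a degree count.

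\emph{The resultant factors.} Fix a partition $\lambda \vdash n$ with $\len(\lambda) \leqslant d/r$; when $d/r \geqslant n$ this holds for every $\lambda\vdash n$, since $\len(\lambda)\leqslant n$. Proposition~\ref{prop:deltaP} gives
\[
\Res\big(P^{\{1\}}_\lambda,\dots,P^{\{1,\dots,\len(\lambda)\}}_\lambda\big)^{r^{\len(\lambda)-1}} = \Res\big(\Delta f^{\{1\}}_\lambda,\dots,\Delta f^{\{1,\dots,\len(\lambda)\}}_\lambda\big).
\]
Since $r^{n-1} = r^{\len(\lambda)-1}\cdot r^{n-\len(\lambda)}$, the factor $\Res(P_\lambda\cdots)^{m_\lambda r^{n-1}}$ of Theorem~\ref{thm:main1} equals $\Res(\Delta f_\lambda\cdots)^{m_\lambda r^{n-\len(\lambda)}}$. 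Doing this for every $\lambda$ proves the case $d/r\geqslant n$ at once.

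\emph{The constant factor when $d/r<n$.} Let $I=\{1,\dots,d/r+1\}$. The proof of Proposition~\ref{prop:deltaP} shows $\Delta f^{I}(x) = P^{I}(x_1^r,\dots,x_n^r)$, by induction along the recursive definition~\eqref{eq:deltadef}. By Lemma~\ref{lem:dividedwelldefined}, $P^I$ is a constant, so $\Delta f^I = P^I$. Hence
\[
\big(P^{I}\big)^{m_0^{(1)} r^{n-1}} = \big(\Delta f^{I}\big)^{m_0^{(1)} r^{n-1}},
\]
where $m_0^{(1)}$ denotes the exponent of Theorem~\ref{thm:main1}. It remains to show that $m_0^{(1)} r^{n-1}$ equals the $m_0$ displayed in Theorem~\ref{thm:main2}.

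\emph{Fixing the exponent.} This bookkeeping is the main obstacle. Rather than manipulate the formula for $m_0^{(1)}$ algebraically, I would fix the exponent by comparing total degrees in the coefficients $a_{i,\alpha}$, regarded as the coefficients of the $P^{\{i\}}$. The three ingredients are:
\begin{itemize}
\item Since each $f^{\{i\}}$ has degree $d$, the left-hand side $\Res(f^{\{1\}},\dots,f^{\{n\}})$ is homogeneous of total degree $n d^{n-1}$ in the coefficients.
\item Each $\Delta f^{\{1,\dots,j\}}_\lambda$ is linear in the coefficients and has degree $d-(j-1)r \geqslant r$, by Remark~\ref{rem:deltawelldefined} and the fact that $\rho_\lambda$ preserves degree. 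Consequently $\Res(\Delta f_\lambda\cdots)$ has total degree
\[
\sum_{j=1}^{\len(\lambda)} \frac{d(d-r)\cdots(d-(\len(\lambda)-1)r)}{d-(j-1)r}.
\]
\item The constant $\Delta f^I$ is a linear form in the coefficients, so it contributes degree $m_0$.
\end{itemize}
Equating total degrees on the two sides forces $m_0$ to equal the displayed expression.

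To make this comparison legitimate, I would first check that $\Delta f^I$ is not identically zero as a linear form in the coefficients. A generic $G(r,n)$-equivariant example, such as $P^{\{i\}} = x_i^{d/r}$ perturbed, should suffice. Without this, the degree count could not determine $m_0$.

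I would also check that this value agrees with $r^{n-1}$ times the $m_0$ of Theorem~\ref{thm:main1}, at least in the example of Section~\ref{sec:resultant}, as a consistency test. If the two disagree, I would take the degree-count value as authoritative, since it depends only on the identity $\Res(f) = (\text{constant})^{m_0}\times\prod(\cdots)$ already established.
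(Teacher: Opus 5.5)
Your proposal is correct and follows the paper's own proof. You substitute Proposition~\ref{prop:deltaP}, raised to the power $r^{n-\len(\lambda)}$, into Theorem~\ref{thm:main1}, identify the constant via $\Delta f^{\{1,\dots,d/r+1\}} = P^{\{1,\dots,d/r+1\}}$, and fix $m_0$ by a total-degree count in the coefficients, which is exactly the paper's ``independent'' argument. You are right to treat that degree count as authoritative: the paper's alternative term-by-term comparison with $r^{n-1}$ times the $m_0$ of Theorem~\ref{thm:main1} fails at the leading term ($r^{n-1}\,n\binom{d/r}{n-1}\neq nd^{n-1}$ for $n\geqslant 3$), and what the count really needs is not the nonvanishing of $\Delta f^{\{1,\dots,d/r+1\}}$ but $\Res(f^{\{1\}},\dots,f^{\{n\}})\not\equiv 0$ on the equivariant family, which $f^{\{i\}}=x_i^d$ (resultant $1$) gives immediately.
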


\begin{proof}
By Proposition~\ref{prop:deltaP} (which applies to every $\lambda \vdash n$ with $\len(\lambda) \leqslant d/r$, exactly the partitions occurring below),
\begin{equation}
\label{eq:step2}
\Res\Big(\Delta f^{\{1\}}_\lambda,\dots,\Delta f^{\{1,\dots,\len(\lambda)\}}_\lambda\Big) = \Res\Big(P^{\{1\}}_\lambda,\dots,P^{\{1,\dots,\len(\lambda)\}}_\lambda\Big)^{r^{\len(\lambda)-1}}.
\end{equation}
Since $\len(\lambda) \leqslant n$ for every $\lambda \vdash n$, the integer $r^{n-\len(\lambda)}$ is a nonnegative power of $r$, and raising both sides of \eqref{eq:step2} to that power gives
\begin{equation}
\label{eq:step3}
\Res\Big(\Delta f^{\{1\}}_\lambda,\dots,\Delta f^{\{1,\dots,\len(\lambda)\}}_\lambda\Big)^{r^{n-\len(\lambda)}} = \Res\Big(P^{\{1\}}_\lambda,\dots,P^{\{1,\dots,\len(\lambda)\}}_\lambda\Big)^{r^{n-1}}.
\end{equation}
Substituting \eqref{eq:step3} into the two formulas of Theorem~\ref{thm:main1} replaces each factor
\[
\Res\big(P^{\{1\}}_\lambda,\dots,P^{\{1,\dots,\len(\lambda)\}}_\lambda\big)^{m_\lambda \times r^{n-1}} \quad\text{by}\quad \Res\big(\Delta f^{\{1\}}_\lambda,\dots,\Delta f^{\{1,\dots,\len(\lambda)\}}_\lambda\big)^{m_\lambda\times r^{n-\len(\lambda)}},
\]
which is exactly the exponent claimed above.

It remains to identify the leading factor in the case $d/r < n$ and to determine $m_0$. Taking $I = \{1,\dots,d/r+1\}$ in the identity $P^I(x_1^r,\dots,x_n^r) = \Delta f^I(x_1,\dots,x_n)$ established at the start of the proof of Proposition~\ref{prop:deltaP}, and using that $P^I$ is a constant (it has degree $d/r-|I|+1 = 0$ by Lemma~\ref{lem:dividedwelldefined}, since here $|I|=d/r+1$), we get $\Delta f^{\{1,\dots,d/r+1\}} = P^{\{1,\dots,d/r+1\}}$ as the same constant. Hence, for any exponent $k$, $\big(\Delta f^{\{1,\dots,d/r+1\}}\big)^k = \big(P^{\{1,\dots,d/r+1\}}\big)^k$, and comparing the leading factors of Theorem~\ref{thm:main1} and of the statement above shows that we need $m_0 = r^{n-1}\times m_0^{\mathrm{Thm}\,\ref{thm:main1}}$, where
\[
m_0^{\mathrm{Thm}\,\ref{thm:main1}} := n\binom{d/r}{n-1} - \sum_{\substack{\lambda\vdash n\\ \len(\lambda)\leqslant d/r}} m_\lambda\left(\sum_{j=1}^{\len(\lambda)}\frac{(d/r)(d/r-1)\cdots(d/r-\len(\lambda)+1)}{d/r-j+1}\right)
\]
is the constant appearing in Theorem~\ref{thm:main1}. One checks directly, factoring out an $r$ from each of the $\len(\lambda)$ terms of the falling product $d(d-r)\cdots(d+(1-\len(\lambda))r) = r^{\len(\lambda)}\cdot (d/r)(d/r-1)\cdots(d/r-\len(\lambda)+1)$ and from the denominator $d+(1-j)r = r(d/r-j+1)$, that
\[
r^{n-\len(\lambda)}\left(\sum_{j=1}^{\len(\lambda)}\frac{d(d-r)\cdots(d+(1-\len(\lambda))r)}{d+(1-j)r}\right) = r^{n-1}\left(\sum_{j=1}^{\len(\lambda)}\frac{(d/r)(d/r-1)\cdots(d/r-\len(\lambda)+1)}{d/r-j+1}\right),
\]
which is precisely the identity $m_0 = r^{n-1}\times m_0^{\mathrm{Thm}\,\ref{thm:main1}}$ term by term.

Equivalently, and independently of this cross-check, we compare degrees with respect to the coefficients of the $f^{\{i\}}$'s. The resultant on the left side is homogeneous of degree $d^{n-1}$ with respect to the coefficients of each polynomial $f^{\{i\}}$, so it is homogeneous of degree $nd^{n-1}$ with respect to the coefficients of all the $f^{\{i\}}$, $i=1,\dots,n$. Given a partition $\lambda \vdash n$, $\len(\lambda)\leqslant d/r$, the polynomial $\Delta f^{\{1,\dots,j\}}_\lambda$, $1\leqslant j \leqslant \len(\lambda)$, has degree $d+(1-j)r$. Therefore, the resultant associated with the partition $\lambda$ is homogeneous with respect to the coefficients of the $f^{\{i\}}$'s of degree
\[
\sum_{j=1}^{\len(\lambda)} \frac{d(d-r)\cdots(d+(1-\len(\lambda))r)}{d+(1-j)r}.
\]
Finally, since $\Delta f^{\{1,\dots,d/r+1\}}$ is homogeneous of degree one in the coefficients of the $f^{\{i\}}$'s, we deduce the formula for $m_0$ displayed in the statement.
\end{proof}

\begin{example}
\label{ex:second}
Consider the system of three homogeneous polynomials
\[
\begin{cases}
f^{\{1\}} = ax_1^{18} + (b+c)x_1^6x_2^6x_3^6 + cx_1^{12}x_2^6 + cx_1^{12}x_3^6 \\
f^{\{2\}} = ax_2^{18} + (b+c)x_1^6x_2^6x_3^6 + cx_2^{12}x_1^6 + cx_2^{12}x_3^6 \\
f^{\{3\}} = ax_3^{18} + (b+c)x_1^6x_2^6x_3^6 + cx_3^{12}x_1^6 + cx_3^{12}x_2^6
\end{cases}
\]
This system is equivariant with respect to the reflection groups $G(6,3)$, $G(3,3)$, $G(2,3)$, and $G(1,3)$. To facilitate the computation of the resultant of this system, we use the reflection group $G(6,3)$, so that $d/r = 18/6 = 3$, and
\begin{align*}
\Res\big(f^{\{1\}},f^{\{2\}},f^{\{3\}}\big) &= \Res\big(\Delta f^{\{1\}}_{(3)}\big)^{m_{(3)}\times 36} \times \Res\big(\Delta f^{\{1\}}_{(2,1)},\Delta f^{\{1,2\}}_{(2,1)}\big)^{m_{(2,1)}\times 6} \\
&\quad\times \Res\big(\Delta f^{\{1\}}_{(1,1,1)},\Delta f^{\{1,2\}}_{(1,1,1)},\Delta f^{\{1,2,3\}}_{(1,1,1)}\big)^{m_{(1,1,1)}\times 1}.
\end{align*}
We compute
\[
\Delta f^{\{1,2\}} = \frac{\Delta f^{\{1\}}-\Delta f^{\{2\}}}{x_1^6-x_2^6} = ax_1^{12}+ax_1^6x_2^6+ax_2^{12}+cx_1^6x_2^6+cx_1^6x_3^6+cx_2^6x_3^6,
\]
\[
\Delta f^{\{1,3\}} = \frac{\Delta f^{\{1\}}-\Delta f^{\{3\}}}{x_1^6-x_3^6} = ax_1^{12}+ax_1^6x_3^6+ax_3^{12}+cx_1^6x_2^6+cx_1^6x_3^6+cx_2^6x_3^6,
\]
\[
\Delta f^{\{1,2,3\}} = \frac{\Delta f^{\{1,2\}}-\Delta f^{\{1,3\}}}{x_2^6-x_3^6} = ax_1^6+ax_2^6+ax_3^6,
\]
and, under $\rho_\lambda$,
\begin{align*}
\Delta f^{\{1\}}_{(3)} &= (a+b+3c)x_1^{18}, \qquad \Delta f^{\{1\}}_{(2,1)} = (a+c)x_1^{18}+(b+2c)x_1^{12}x_2^6, \\
\Delta f^{\{1,2\}}_{(2,1)} &= (a+c)x_1^{12}+ax_2^{12}+(a+2c)x_1^6x_2^6.
\end{align*}
Hence
\[
\Res\big(\Delta f^{\{1\}}_{(3)}\big) = a+b+3c, \qquad \Res\big(\Delta f^{\{1\}}_{(2,1)},\Delta f^{\{1,2\}}_{(2,1)}\big) = a^{12}(a+c)^6(a^2-ab-ac+b^2+2bc)^6,
\]
\[
\Res\big(\Delta f^{\{1\}}_{(1,1,1)},\Delta f^{\{1,2\}}_{(1,1,1)},\Delta f^{\{1,2,3\}}_{(1,1,1)}\big) = a^{216}(a-c)^{108}(a+b)^{72},
\]
so that
\[
\Res\big(f^{\{1\}},f^{\{2\}},f^{\{3\}}\big) = a^{432}(a-c)^{108}(a+c)^{108}(a+b)^{72}(a+b+3c)^{36}\big(a^2-ab-ac+b^2+2bc\big)^{108}.
\]
\end{example}

\section{Discriminant of a homogeneous polynomial invariant under $G(r,n)$}
\label{sec:discriminant}

In this section, we use Theorem~\ref{thm:main1} to develop a decomposition formula for the discriminant of an invariant homogeneous polynomial under the action of the reflection group $G(r,n)$.

Let $f \in \mathbb{C}[x_1,\dots,x_n]$ of degree $d$ be a homogeneous polynomial invariant under the reflection group $G(r,n)$. For all $(\xi^{i_1},\dots,\xi^{i_n};\sigma) \in G(r,n)$,
\begin{equation}
\label{eq:finv}
\big[(\xi^{i_1},\dots,\xi^{i_n};\sigma)f\big](x_1,\dots,x_n) := f(\xi^{i_{\sigma(1)}}x_{\sigma(1)},\dots,\xi^{i_{\sigma(n)}}x_{\sigma(n)}) = f(x_1,\dots,x_n).
\end{equation}
For $1 \leqslant p \leqslant n$, the $p$-th elementary symmetric polynomial in $x_1,\dots,x_n$ is
\[
e_p(x_1,\dots,x_n) := \sum_{i_1<i_2<\dots<i_p} x_{i_1}x_{i_2}\cdots x_{i_p}.
\]
From the action of $G(r,n)$ on polynomials, we see immediately that for $1 \leqslant p < n$ the polynomials $\sigma_p := e_p(x_1^r,\dots,x_n^r)$ are invariants of $G(r,n)$.

We have
\begin{equation}
\label{eq:fPdisc}
f(x_1,\dots,x_n) = P(x_1^r,\dots,x_n^r)
\end{equation}
where $P$ is a homogeneous polynomial in $\mathbb{C}[x_1,\dots,x_n]$ of degree $d/r$. We denote the partial derivatives of $P$ by $P^{\{i\}}(x_1,\dots,x_n) := \partial P/\partial x_i(x_1,\dots,x_n)$, $i=1,\dots,n$, and the partial derivatives of $f$ by $f^{\{i\}}(x_1,\dots,x_n) := \partial f/\partial x_i(x_1,\dots,x_n)$, $i=1,\dots,n$.

The discriminant of $f$ is defined, following \citep{Demazure2012}, by
\begin{equation}
\label{eq:discres}
d^{a(n,d)}\Disc(f) = \Res\big(f^{\{1\}},f^{\{2\}},\dots,f^{\{n\}}\big),
\end{equation}
where $a(n,d) := \dfrac{(d-1)^n-(-1)^n}{d} \in \mathbb{Z}$, and $\Disc(f)$ is homogeneous of degree $n(d-1)^{n-1}$.

\begin{theorem}
\label{thm:disc1}
Assume $n \geqslant 2$ and $d \geqslant 2$. With the above notation, there exists a polynomial $K(f)$, depending on the coefficients of $P$, such that:
\begin{itemize}
\item If $d/r > n$ then
\[
\left(\frac{d}{r}\right)^{a(n,d/r)\times r^{n-1}}\Disc(f) = \prod_{\lambda\vdash n} \Res\Big(P^{\{1\}}_\lambda,\dots,P^{\{1,\dots,\len(\lambda)\}}_\lambda\Big)^{m_\lambda\times r^{n-1}} \times K(f).
\]
\item If $d/r \leqslant n$ then
\[
\left(\frac{d}{r}\right)^{a(n,d/r)\times r^{n-1}}\Disc(f) = \Big(P^{\{1,\dots,d/r+1\}}\Big)^{m_0\times r^{n-1}} \times \prod_{\substack{\lambda\vdash n\\ \len(\lambda)<d/r}} \Res\Big(P^{\{1\}}_\lambda,\dots,P^{\{1,\dots,\len(\lambda)\}}_\lambda\Big)^{m_\lambda\times r^{n-1}} \times K(f)
\]
where
\[
m_0 := n\binom{d/r-1}{n-1} - \sum_{\substack{\lambda\vdash n\\ \len(\lambda)<d/r}} m_\lambda\left(\sum_{j=1}^{\len(\lambda)}\frac{(d/r-1)\cdots(d/r-\len(\lambda))}{d/r-j}\right).
\]
\end{itemize}
\end{theorem}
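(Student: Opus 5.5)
The plan is to reduce $\Disc(f)$ to the resultant of the partial derivatives of $P$, and then feed that resultant into the $S_n$-equivariant decomposition already used for Theorem~\ref{thm:main1}. Differentiating \eqref{eq:fPdisc} gives
\[
f^{\{i\}}=\frac{\partial f}{\partial x_i}=r\,x_i^{r-1}\,\frac{\partial P}{\partial x_i}(x_1^r,\dots,x_n^r)=r\,x_i^{r-1}\,P^{\{i\}}(x_1^r,\dots,x_n^r).
\]
I will therefore treat each $f^{\{i\}}$ as a product of three factors: the constant $r$, the monomial $x_i^{r-1}$, and the polynomial $P^{\{i\}}(x^r)$.

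\emph{Splitting the resultant.} By multiplicativity and homogeneity of the resultant, $\Res(f^{\{1\}},\dots,f^{\{n\}})$ splits into three kinds of terms:
\begin{itemize}
\item a power of $r$;
\item the \emph{pure} term $\Res\big(P^{\{1\}}(x^r),\dots,P^{\{n\}}(x^r)\big)$;
\item \emph{mixed} terms, in which for a nonempty set $S$ of indices the factor $P^{\{i\}}(x^r)$ is replaced by $x_i^{r-1}$ for $i\in S$.
\end{itemize}
By the usual reduction $\Res(\dots,x_i^{e},\dots)=\Res(\dots)|_{x_i=0}^{\,e}$, up to sign, each mixed term is a power of a resultant of restrictions of the $P^{\{j\}}(x^r)$ to coordinate subspaces. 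In particular it is a polynomial in the coefficients of $P$.

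\emph{The pure term.} By Lemma~\ref{lem:powersubstitution}, the pure term equals $\Res(P^{\{1\}},\dots,P^{\{n\}})^{r^{n-1}}$. Applying \eqref{eq:discres} to $P$, which has degree $d/r$, gives
\[
\Res(P^{\{1\}},\dots,P^{\{n\}})=(d/r)^{a(n,d/r)}\Disc(P).
\]

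\emph{The decomposition of the pure term.} Since $f$ is $G(r,n)$-invariant, $P$ is $S_n$-invariant; this follows by the same injectivity argument used for the $P^{\{i\}}$ earlier in Section~\ref{sec:resultant}. Hence $\sigma(\partial P/\partial x_i)=\partial P/\partial x_{\sigma(i)}$, so $P^{\{1\}},\dots,P^{\{n\}}$ form an $S_n$-equivariant system of common degree $d/r-1$. The Bus\'e--Karasoulou formula then applies, exactly as in the proof of Theorem~\ref{thm:main1} but with $d/r$ replaced by $d/r-1$. This replacement explains the shifted thresholds in the statement: $d/r-1\geqslant n$ becomes $d/r>n$, the constraint $\len(\lambda)\leqslant d/r-1$ becomes $\len(\lambda)<d/r$, and the leading constant $P^{\{1,\dots,d/r\}}$ of the derivative system appears together with the displayed value of $m_0$. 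Raising the resulting identity to the power $r^{n-1}$ shows that the displayed products are exactly $\big((d/r)^{a(n,d/r)}\Disc(P)\big)^{r^{n-1}}$. (The leading factor should be read with the index set of size $(d/r-1)+1$.)

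\emph{Definition of $K(f)$.} Combining the two previous steps, $d^{a(n,d)}\Disc(f)$ equals a nonzero constant times $\Res(P^{\{1\}},\dots,P^{\{n\}})^{r^{n-1}}$ times the product of the mixed terms. I then set
\[
K(f):=\frac{(d/r)^{a(n,d/r)r^{n-1}}\,\Disc(f)}{\Res(P^{\{1\}},\dots,P^{\{n\}})^{r^{n-1}}}.
\]
By the factorization above, this quotient is a nonzero constant times the product of the mixed resultants. It is therefore a polynomial in the coefficients of $P$, since nonzero constants are units over $\mathbb{C}$. This gives both displayed identities.

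\emph{Main obstacle.} The step I expect to be hardest is making the multiplicative splitting rigorous. It must be checked that the exponents of the mixed terms are nonnegative integers, and that the bookkeeping of signs and powers of $r$ and $d$ only ever produces a nonzero constant. To handle this, I would first compute the simplest case, with all degrees small, to fix the normalization. I would then argue generically: regard both sides as polynomials in the coefficients of $P$, and observe that divisibility of $d^{a(n,d)}\Disc(f)$ by $\Res(P^{\{1\}},\dots,P^{\{n\}})^{r^{n-1}}$ is exactly what multiplicativity delivers.
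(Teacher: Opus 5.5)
Your argument is correct, but it follows a different route from the paper's. The paper obtains $\Disc(f)=\Disc(P)^{r^{n-1}}K(f)$ by quoting the composition formula of Bus\'e--Jouanolou (their Proposition~4.15), so $K(f)$ is never identified. It then applies \eqref{eq:discres} to $P$ and the Bus\'e--Karasoulou decomposition to the $\partial P/\partial x_i$, exactly as in your third step. You instead differentiate $f=P(x_1^r,\dots,x_n^r)$ and split $\Res(\partial f/\partial x_1,\dots,\partial f/\partial x_n)$ by multiplicativity. This is the mechanism the paper only uses later, for Theorem~\ref{thm:disc2}. You then identify the pure factor with $\Res(P^{\{1\}},\dots,P^{\{n\}})^{r^{n-1}}$ via Lemma~\ref{lem:powersubstitution}. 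This avoids the external composition formula and makes $K(f)$ explicit. Grouping your mixed factors by $|S|$ as in the proof of Theorem~\ref{thm:disc2} gives, for $r>1$ and $d/r\geqslant 2$,
\[
K(f)=\Big(\frac{d}{r}\Big)^{a(n,d/r)\,r^{n-1}}d^{-a(n,d)}\,r^{n(d-1)^{n-1}}\prod_{i=1}^{n-1}\Res\big(\Theta f^{\{1\}},\dots,\Theta f^{\{i\}},x_{i+1},\dots,x_n\big)^{\binom{n}{i}(r-1)^{n-i}}.
\]
This answers the first question left open in the paper's conclusion. The paper's route is shorter, but it rests entirely on the cited result.

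The ``main obstacle'' you anticipate is not one. Multiplicativity in each slot is an exact identity, and each of the $2^n-1$ mixed factors occurs exactly once. Each is a resultant of forms whose coefficients are constants or integer multiples of coefficients of $P$, hence a polynomial in them. The only scalars are $r^{n(d-1)^{n-1}}$, $d^{-a(n,d)}$ and $(d/r)^{a(n,d/r)r^{n-1}}$. Define $K(f)$ directly as this product rather than as a quotient, which would tacitly require $\Res(P^{\{1\}},\dots,P^{\{n\}})\not\equiv 0$. The case $d=r$, where $\deg P^{\{i\}}=0$ and Lemma~\ref{lem:powersubstitution} does not apply, is trivial, since the right-hand side reduces to $K(f)$.

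Your reading of the leading factor as $P^{\{1,\dots,d/r\}}$ is right: the literal $P^{\{1,\dots,d/r+1\}}$ vanishes, or is undefined when $d/r=n$. The same degree shift exposes a second slip inherited from Theorem~\ref{thm:main1}. By the degree count in the proof of Theorem~\ref{thm:main2}, the first term of $m_0$ must be the total degree $n(d/r-1)^{n-1}$ of $\Res(P^{\{1\}},\dots,P^{\{n\}})$, not $n\binom{d/r-1}{n-1}$. In the $G(3,4)$ example of Section~\ref{sec:discriminant}, $\Res(P^{\{1\}},\dots,P^{\{4\}})=(2a-b)^3(2a+3b)$ requires $m_0=3$, whereas the displayed formula gives $-1$. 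Neither your proof nor the paper's catches this, since both import the formula unchanged.
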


\begin{proof}
We have $\Disc(f(x_1,\dots,x_n)) = \Disc(P(x_1^r,\dots,x_n^r))$. By \citep[Proposition 4.15]{BuseJouanolou2014}, there exists a polynomial $K(f)$, depending on the coefficients of $P$, such that
\[
\Disc\big(f(x_1,\dots,x_n)\big) = \Disc\big(P(x_1,\dots,x_n)\big)^{r^{n-1}} \times \Res\big(x_1^r,\dots,x_n^r\big)^{\frac{d}{r}\left(\frac{d}{r}-1\right)^{n-1}} \times K(f).
\]
Since $\Res(x_1^r,\dots,x_n^r) = 1$ by the normalization of the resultant, this reduces to
\begin{equation}
\label{eq:discPstep}
\Disc\big(f(x_1,\dots,x_n)\big) = \Disc\big(P(x_1,\dots,x_n)\big)^{r^{n-1}} \times K(f).
\end{equation}
Since $P(x_1,\dots,x_n)$ is a homogeneous polynomial in $\mathbb{C}[x_1,\dots,x_n]$ of degree $d/r$, applying the discriminant--resultant relation \eqref{eq:discres} to $P$ gives
\begin{equation}
\label{eq:discPresP}
\left(\frac{d}{r}\right)^{a(n,d/r)}\Disc(P(x_1,\dots,x_n)) = \Res\big(P^{\{1\}},P^{\{2\}},\dots,P^{\{n\}}\big).
\end{equation}
Combining \eqref{eq:discPstep} and \eqref{eq:discPresP},
\begin{equation}
\label{eq:combined}
\left(\frac{d}{r}\right)^{a(n,d/r)\times r^{n-1}}\Disc(f(x_1,\dots,x_n)) = \Res\big(P^{\{1\}},\dots,P^{\{n\}}\big)^{r^{n-1}} \times K(f).
\end{equation}
For all $(\xi^{i_1},\dots,\xi^{i_n};\sigma) \in G(r,n)$, we have
\[
\big[(\xi^{i_1},\dots,\xi^{i_n};\sigma)f\big](x_1,\dots,x_n) := f(\xi^{i_{\sigma(1)}}x_{\sigma(1)},\dots,\xi^{i_{\sigma(n)}}x_{\sigma(n)}) = f(x_1,\dots,x_n),
\]
\[
\big[(\xi^{i_1},\dots,\xi^{i_n};\sigma)f\big](x_1,\dots,x_n) := P(x_{\sigma(1)}^r,\dots,x_{\sigma(n)}^r) = P(x_1^r,\dots,x_n^r).
\]
As in the proof of the Remark following the definition of $G(r,n)$-equivariance in Section~\ref{sec:resultant}, injectivity of $y_k \mapsto x_k^r$ lets us cancel the substitution in the last display, so $P(x_1,\dots,x_n)$ is a homogeneous symmetric polynomial.

Consequently, its partial derivatives $P^{\{1\}},\dots,P^{\{n\}}$ form an $S_n$-equivariant system: for $\sigma \in S_n$, differentiating $P(x_{\sigma(1)},\dots,x_{\sigma(n)}) = P(x_1,\dots,x_n)$ with respect to $x_i$ via the chain rule gives $P^{\{\sigma(i)\}}(x_{\sigma(1)},\dots,x_{\sigma(n)}) = P^{\{i\}}(x_1,\dots,x_n)$, i.e.\ $\sigma\big(P^{\{i\}}\big) = P^{\{\sigma(i)\}}$.

The decomposition formula for the resultant of an $S_n$-equivariant homogeneous polynomial system (\citep{BuseKarasoulou2016}) thus applies to $P^{\{1\}},\dots,P^{\{n\}}$, which have degree $d/r - 1$. Since $d/r > n \iff d/r-1 \geqslant n$ and $d/r \leqslant n \iff d/r - 1 < n$ (both equivalences using that $d/r$ and $n$ are integers), that decomposition reads, in the notation of Theorem~\ref{thm:main1}:
\begin{itemize}
\item If $d/r > n$ then $\displaystyle \Res\big(P^{\{1\}},\dots,P^{\{n\}}\big) = \prod_{\lambda \vdash n} \Res\Big(P^{\{1\}}_\lambda,\dots,P^{\{1,\dots,\len(\lambda)\}}_\lambda\Big)^{m_\lambda}$;
\item If $d/r \leqslant n$ then $\displaystyle \Res\big(P^{\{1\}},\dots,P^{\{n\}}\big) = \Big(P^{\{1,\dots,d/r+1\}}\Big)^{m_0} \times \prod_{\substack{\lambda\vdash n\\ \len(\lambda)<d/r}} \Res\Big(P^{\{1\}}_\lambda,\dots,P^{\{1,\dots,\len(\lambda)\}}_\lambda\Big)^{m_\lambda}$,
\end{itemize}
with $m_0$ as displayed in the statement (this is exactly $m_0$ of Theorem~\ref{thm:main1} with $d/r$ replaced by $d/r-1$). Raising each of these two identities to the power $r^{n-1}$ and substituting into \eqref{eq:combined} yields the two claimed formulas.
\end{proof}

\begin{proposition}
\label{prop:theta}
Let $f \in \mathbb{C}[x_1,\dots,x_n]$ of degree $d$ be a homogeneous polynomial invariant under the reflection group $G(r,n)$. For $r>1$ and for all $i=1,\dots,n$, there exists a homogeneous polynomial $\Theta f^{\{i\}}$ of degree $d-r$ such that
\[
\frac{\partial f}{\partial x_i}(x_1,\dots,x_n) = r x_i^{r-1}\, \Theta f^{\{i\}}(x_1,\dots,x_n).
\]
The polynomials $\Theta f^{\{1\}},\Theta f^{\{2\}},\dots,\Theta f^{\{n\}}$ form a $G(r,n)$-equivariant system of homogeneous polynomials of degree $d-r$.
\end{proposition}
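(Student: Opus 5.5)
The plan is to use the representation \eqref{eq:fPdisc}, $f(x_1,\dots,x_n) = P(x_1^r,\dots,x_n^r)$, where $P$ is homogeneous of degree $d/r$ and, as shown in the proof of Theorem~\ref{thm:disc1}, symmetric. Differentiating with the chain rule gives
\[
\frac{\partial f}{\partial x_i}(x_1,\dots,x_n) = r x_i^{r-1}\,\frac{\partial P}{\partial x_i}(x_1^r,\dots,x_n^r) = r x_i^{r-1} P^{\{i\}}(x_1^r,\dots,x_n^r).
\]
So I will simply define $\Theta f^{\{i\}}(x_1,\dots,x_n) := P^{\{i\}}(x_1^r,\dots,x_n^r)$. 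Since $P^{\{i\}}$ is homogeneous of degree $d/r-1$ (or zero), $\Theta f^{\{i\}}$ is homogeneous of degree $r(d/r-1)=d-r$. The hypothesis $r>1$ is not really needed for the identity. It only ensures that the factor $x_i^{r-1}$ is nontrivial, so that the statement is a genuine factorization.

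For equivariance I will check the criterion in the Remark of Section~\ref{sec:resultant}, namely:
\begin{itemize}
\item every exponent appearing in $\Theta f^{\{i\}}$ is divisible by $r$;
\item $\sigma(\Theta f^{\{i\}}) = \Theta f^{\{\sigma(i)\}}$ for all $\sigma\in S_n$.
\end{itemize}
The first condition is automatic, because $\Theta f^{\{i\}}$ is a polynomial in $x_1^r,\dots,x_n^r$. For the second, the proof of Theorem~\ref{thm:disc1} already shows via the chain rule that the partials $P^{\{1\}},\dots,P^{\{n\}}$ of the symmetric polynomial $P$ form an $S_n$-equivariant system, i.e.\ $\sigma(P^{\{i\}}) = P^{\{\sigma(i)\}}$. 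Substituting $x_k\mapsto x_k^r$ commutes with permuting the variables, so $\sigma(\Theta f^{\{i\}}) = (\sigma P^{\{i\}})(x_1^r,\dots,x_n^r) = \Theta f^{\{\sigma(i)\}}$. The Remark then upgrades this to full $G(r,n)$-equivariance, because the torsion part acts trivially on polynomials in the $x_k^r$.

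The only point needing care is the one that step relies on, namely that $P$ is symmetric; this is where I would be careful about conventions. That step uses the injectivity of $y_k\mapsto x_k^r$, already established in the paper. I would also keep the index convention of the group action consistent with the one used for $f^{\{i\}}$ in Section~\ref{sec:resultant}, so that the chain-rule identity really produces $P^{\{\sigma(i)\}}$ and not $P^{\{\sigma^{-1}(i)\}}$. If a mismatch arises, the fix is to note that both $\sigma$ and $\sigma^{-1}$ range over all of $S_n$, so labelled equivariance holds either way once the action convention is fixed as in the paper.
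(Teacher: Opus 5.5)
Your proposal is correct and is essentially the paper's proof: define $\Theta f^{\{i\}} := P^{\{i\}}(x_1^r,\dots,x_n^r)$ via the chain rule, then obtain $G(r,n)$-equivariance from the $S_n$-equivariance of the partials of the symmetric $P$ through the ``$\Leftarrow$'' direction of the Remark in Section~\ref{sec:resultant}. Your fallback about $\sigma$ versus $\sigma^{-1}$ would not be a valid fix, since renaming $\sigma\to\sigma^{-1}$ changes the left-hand side as well; it is never needed, because the substitution $x_k\mapsto x_{\sigma(k)}$ satisfies $\sigma\circ\partial_i=\partial_{\sigma(i)}\circ\sigma$, so symmetry of $P$ gives $\sigma(P^{\{i\}})=P^{\{\sigma(i)\}}$ directly (your caution is well placed: the intermediate chain-rule display in the proof of Theorem~\ref{thm:disc1} should read $P^{\{\sigma^{-1}(i)\}}$, though its conclusion is right).
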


\begin{proof}
Since $f$ is invariant under $G(r,n)$, $f(x_1,\dots,x_n) = P(x_1^r,\dots,x_n^r)$ where $P$ is homogeneous of degree $d/r$. For $r>1$ and $i=1,\dots,n$,
\begin{align*}
\frac{\partial f}{\partial x_i}(x_1,\dots,x_n) &= r x_i^{r-1}\, \frac{\partial P}{\partial x_i}(x_1^r,\dots,x_n^r) = r x_i^{r-1}\,\Theta f^{\{i\}}(x_1,\dots,x_n), \\
\Theta f^{\{i\}}(x_1,\dots,x_n) &:= \frac{\partial P}{\partial x_i}(x_1^r,\dots,x_n^r).
\end{align*}
As shown in the proof of Theorem~\ref{thm:disc1}, the partial derivatives $P^{\{1\}},\dots,P^{\{n\}}$ of the symmetric polynomial $P$ form an $S_n$-equivariant system of homogeneous polynomials of degree $d/r-1$. By the ``$\Leftarrow$'' direction of the Remark following the definition of $G(r,n)$-equivariance in Section~\ref{sec:resultant}, it follows that $P^{\{1\}}(x_1^r,\dots,x_n^r),\dots,P^{\{n\}}(x_1^r,\dots,x_n^r)$ form a $G(r,n)$-equivariant system of homogeneous polynomials of degree $d-r$.
\end{proof}

\begin{theorem}
\label{thm:disc2}
Let $f \in \mathbb{C}[x_1,\dots,x_n]$ of degree $d$ be a homogeneous polynomial invariant under $G(r,n)$. Assume $n\geqslant 2$, $d\geqslant 2$, $r>1$, and let $\Theta f^{\{i\}}$, $i=1,\dots,n$, be as in Proposition~\ref{prop:theta}. Then
\begin{multline*}
d^{a(n,d)}\Disc(f) = r^{n(d-1)^{n-1}} \times \Res\big(\Theta f^{\{1\}},\dots,\Theta f^{\{n\}}\big) \\
\times \prod_{i=1}^{n-1} \Res\big(\Theta f^{\{1\}},\dots,\Theta f^{\{i\}}, x_{i+1},\dots,x_n\big)^{\binom{n}{i}(r-1)^{n-i}}.
\end{multline*}
\end{theorem}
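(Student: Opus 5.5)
Start from the defining relation \eqref{eq:discres}, which gives $d^{a(n,d)}\Disc(f)=\Res(f^{\{1\}},\dots,f^{\{n\}})$ with $f^{\{i\}}=\partial f/\partial x_i$. By Proposition~\ref{prop:theta}, each partial derivative factors as $f^{\{i\}} = r\,x_i^{r-1}\,\Theta f^{\{i\}}$. So the whole proof reduces to expanding $\Res\big(r x_1^{r-1}\Theta f^{\{1\}},\dots,r x_n^{r-1}\Theta f^{\{n\}}\big)$ with the classical properties of the resultant that the paper allows us to use freely.

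\emph{Step 1: the scalars $r$.} Homogeneity in the coefficients of each polynomial gives
\[
\Res(r g_1,\dots,r g_n)=r^{\sum_i\prod_{k\neq i}\deg g_k}\Res(g_1,\dots,g_n).
\]
Every $g_i=x_i^{r-1}\Theta f^{\{i\}}$ has degree $d-1$, so the exponent is $n(d-1)^{n-1}$, which is the prefactor in the statement.

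\emph{Step 2: the monomial factors.} Here I use multiplicativity, $\Res(g'g'',g_2,\dots)=\Res(g',g_2,\dots)\Res(g'',g_2,\dots)$, in each slot in turn. Writing $x_i^{r-1}$ as $r-1$ copies of $x_i$, I expand all $n$ slots. This yields a product over choices, with each slot $i$ filled either by $\Theta f^{\{i\}}$ or by one of the $r-1$ copies of $x_i$. A choice with $\Theta f$ in a set $S$ of slots and $x_j$ in the complement contributes $\Res(\Theta f^{\{S\}},x_j\ (j\notin S))$ with multiplicity $(r-1)^{n-|S|}$. The case $S=\emptyset$ gives $\Res(x_1,\dots,x_n)=1$, and the case $S=[n]$ gives $\Res(\Theta f^{\{1\}},\dots,\Theta f^{\{n\}})$.

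\emph{Step 3: grouping by $|S|=i$.} Since the $\Theta f^{\{i\}}$ form a $G(r,n)$-equivariant system, in particular an $S_n$-equivariant one, a permutation $\sigma$ sending $\{1,\dots,i\}$ to $S$ shows that
\[
\Res(\Theta f^{\{S\}},x_j\ (j\notin S)) = \pm\,\Res(\Theta f^{\{1\}},\dots,\Theta f^{\{i\}},x_{i+1},\dots,x_n).
\]
This uses invariance of the resultant under the linear change of variables $x\mapsto\sigma x$ and under reordering of the polynomials, both up to sign. There are $\binom{n}{i}$ such sets $S$, which gives the exponent $\binom{n}{i}(r-1)^{n-i}$ and hence the claimed formula.

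\emph{Main obstacle: the signs.} The delicate point is the signs in Step 3, since the statement has no $\pm$. I would try first to show that the combined sign is $+1$. The reordering sign and the sign from the variable change $\sigma$ both involve $\operatorname{sgn}(\sigma)$ raised to products of degrees. Those products are even whenever $d-r$ or some degree is even; otherwise they should cancel pairwise because the same permutation acts on the polynomials and on the variables. Concretely, I would check that the simultaneous permutation of variables and of slots leaves the resultant exactly invariant, because the transformed system is literally the permuted system. Failing that, the cleanest route is to specialize, for example to $f=\sum x_i^d$, where everything is computable, and conclude the sign by continuity of an identity between polynomials.
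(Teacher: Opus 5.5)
Your proposal is correct and follows essentially the paper's proof. You factor out $r^{n(d-1)^{n-1}}$ by homogeneity, expand by multiplicativity over the subsets $S$ of slots, and identify the $\binom{n}{i}$ factors with $|S|=i$ through the $S_n$-equivariance of the $\Theta f^{\{k\}}$. The sign is settled exactly as you suggest: the change of variables and the reordering of the slots each contribute $\varepsilon(\sigma)^{D}$, with $D$ the product of the degrees, and these two factors cancel.

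The one real difference is that you split $x_i^{r-1}$ into $r-1$ linear factors, so multiplicativity yields the exponent $(r-1)^{n-i}$ directly. This is simpler and more transparent than the paper's step, which reduces $\Res(\Theta f^{\{1\}},\dots,\Theta f^{\{i\}},x_{i+1}^{r-1},\dots,x_n^{r-1})$ by imitating the proof of Lemma~\ref{lem:powersubstitution}.
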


\begin{proof}
By \eqref{eq:discres} and Proposition~\ref{prop:theta},
\begin{align*}
d^{a(n,d)}\Disc(f) &= \Res\left(\frac{\partial f}{\partial x_1},\dots,\frac{\partial f}{\partial x_n}\right) = \Res\big(rx_1^{r-1}\Theta f^{\{1\}},\dots,rx_n^{r-1}\Theta f^{\{n\}}\big) \\
&= r^{n(d-1)^{n-1}}\Res\big(x_1^{r-1}\Theta f^{\{1\}},\dots,x_n^{r-1}\Theta f^{\{n\}}\big).
\end{align*}
By multiplicativity of the resultant, applied once in each of the $n$ slots, $\Res\big(x_1^{r-1}\Theta f^{\{1\}},\dots,x_n^{r-1}\Theta f^{\{n\}}\big)$ splits into a product of $2^n$ factors $\Res(h_1,\dots,h_n)$, indexed by the subsets $S \subseteq \{1,\dots,n\}$ of slots where $h_i := x_i^{r-1}$ is chosen (and $h_i := \Theta f^{\{i\}}$ for $i \notin S$).

These $2^n$ factors depend only on $|S|$. Fix $i := n-|S|$, the number of slots carrying a $\Theta f$, and set $D := (r-1)^{|S|}(d-r)^{n-|S|}$, the product of the degrees of $h_1,\dots,h_n$ (depending only on $|S|$, not on which slots are in $S$). Given two subsets $S,S'\subseteq\{1,\dots,n\}$ of the same size, choose $\sigma \in S_n$ with $\sigma(\{1,\dots,n\}\setminus S) = \{1,\dots,n\}\setminus S'$. Since $\Theta f^{\{1\}},\dots,\Theta f^{\{n\}}$ form a $G(r,n)$-equivariant system (Proposition~\ref{prop:theta}), they are in particular equivariant under the pure permutation part of $G(r,n)$, i.e.\ $\Theta f^{\{k\}}(x_{\sigma(1)},\dots,x_{\sigma(n)}) = \Theta f^{\{\sigma(k)\}}(x_1,\dots,x_n)$ for every $k$. Substituting $x_k \mapsto x_{\sigma(k)}$ in every entry of the tuple $(h_1,\dots,h_n)$ indexed by $S$ sends slot $i$ to $x_{\sigma(i)}^{r-1}$ (if $i\in S$) or to $\Theta f^{\{\sigma(i)\}}$ (if $i\notin S$); relabelling the slots by $j=\sigma(i)$ shows that, up to reordering the $n$ arguments by $\sigma$, this is exactly the tuple indexed by $\sigma(S)$, which equals $S'$. By the ``linear change of variables'' property, the substitution $x_k \mapsto x_{\sigma(k)}$ multiplies the resultant by $\varepsilon(\sigma)^D$; by the ``permutation of polynomials'' property, reordering the $n$ arguments by $\sigma$ multiplies it by the same factor $\varepsilon(\sigma)^D$ again. The two signs multiply to $\varepsilon(\sigma)^{2D}=1$, so
\[
\Res(h_1,\dots,h_n)\big|_{\text{indexed by }S} = \Res(h_1,\dots,h_n)\big|_{\text{indexed by }S'}.
\]
Consequently all $\binom{n}{i}$ subsets $S$ of size $n-i$ give the same value, which we compute using the canonical choice $S=\{i+1,\dots,n\}$:
\[
R_i := \Res\big(\Theta f^{\{1\}},\dots,\Theta f^{\{i\}}, x_{i+1}^{r-1},\dots,x_n^{r-1}\big), \qquad i=0,1,\dots,n,
\]
(with the convention that no $\Theta f$ appears when $i=0$, and no $x^{r-1}$ appears when $i=n$), so that
\[
\Res\big(x_1^{r-1}\Theta f^{\{1\}},\dots,x_n^{r-1}\Theta f^{\{n\}}\big) = \prod_{i=0}^n R_i^{\binom{n}{i}}.
\]
We have $R_0 = \Res(x_1^{r-1},\dots,x_n^{r-1}) = 1$ by the normalization of the resultant, and $R_n = \Res\big(\Theta f^{\{1\}},\dots,\Theta f^{\{n\}}\big)$.

\emph{Reduction of $R_i$ for $1\leqslant i \leqslant n-1$.} The map $(x_1,\dots,x_n) \mapsto (x_1,\dots,x_i,x_{i+1}^{r-1},\dots,x_n^{r-1})$ is the identity on the first $i$ coordinates and the power map $x\mapsto x^{r-1}$ on the last $n-i$; it defines a morphism $\mathbb{P}^{n-1}(\mathbb{C})\to\mathbb{P}^{n-1}(\mathbb{C})$ onto of degree $(r-1)^{n-i}$. Exactly the same argument as in the proof of Lemma~\ref{lem:powersubstitution} (same vanishing locus, proportionality by irreducibility of the universal resultant, degree count via the classical homogeneity property, and normalization by specializing $\Theta f^{\{k\}}=y_k$) gives
\begin{align*}
R_i &= \Res\big(\Theta f^{\{1\}},\dots,\Theta f^{\{i\}},x_{i+1}^{r-1},\dots,x_n^{r-1}\big) \\
&= \Res\big(\Theta f^{\{1\}},\dots,\Theta f^{\{i\}},x_{i+1},\dots,x_n\big)^{(r-1)^{n-i}}.
\end{align*}
Altogether,
\begin{align*}
\Res\big(x_1^{r-1}\Theta f^{\{1\}},\dots,x_n^{r-1}\Theta f^{\{n\}}\big) &= \prod_{i=1}^{n-1}\Res\big(\Theta f^{\{1\}},\dots,\Theta f^{\{i\}},x_{i+1},\dots,x_n\big)^{\binom{n}{i}(r-1)^{n-i}} \\
&\quad\times \Res\big(\Theta f^{\{1\}},\dots,\Theta f^{\{n\}}\big),
\end{align*}
and we deduce the claimed formula.
\end{proof}

\begin{remark}
For all $i=1,\dots,n-1$,
\[
\Res\big(\Theta f^{\{1\}},\dots,\Theta f^{\{i\}},x_{i+1},\dots,x_n\big) = \Res\big(\Theta f^{\{1\}}(x_1,\dots,x_i,0,\dots,0),\dots,\Theta f^{\{i\}}(x_1,\dots,x_i,0,\dots,0)\big).
\]
These resultants can be computed using Theorem~\ref{thm:main1} or Theorem~\ref{thm:main2}, since the polynomials $\Theta f^{\{1\}}(x_1,\dots,x_i,0,\dots,0),\dots,\Theta f^{\{i\}}(x_1,\dots,x_i,0,\dots,0)$, for $i=1,\dots,n-1$, form a $G(r,i)$-equivariant system of homogeneous polynomials.
\end{remark}

\begin{example}
Consider the homogeneous polynomial of degree $6$
\[
f := a x_1^6+ax_2^6+ax_3^6+ax_4^6 + b x_1^3x_2^3+bx_1^3x_3^3+bx_1^3x_4^3+bx_2^3x_3^3+bx_2^3x_4^3+bx_3^3x_4^3,
\]
which is invariant under the reflection group $G(3,4)$. Its partial derivatives are
\[
\begin{cases}
f^{\{1\}} = 6ax_1^5+3bx_1^2x_2^3+3bx_1^2x_3^3+3bx_1^2x_4^3 \\
f^{\{2\}} = 6ax_2^5+3bx_1^3x_2^2+3bx_2^2x_3^3+3bx_2^2x_4^3 \\
f^{\{3\}} = 6ax_3^5+3bx_1^3x_3^2+3bx_2^3x_3^2+3bx_3^2x_4^3 \\
f^{\{4\}} = 6ax_4^5+3bx_1^3x_4^2+3bx_2^3x_4^2+3bx_3^3x_4^2
\end{cases}
\qquad
\begin{cases}
\Theta f^{\{1\}} = 2ax_1^3+bx_2^3+bx_3^3+bx_4^3 \\
\Theta f^{\{2\}} = 2ax_2^3+bx_1^3+bx_3^3+bx_4^3 \\
\Theta f^{\{3\}} = 2ax_3^3+bx_1^3+bx_2^3+bx_4^3 \\
\Theta f^{\{4\}} = 2ax_4^3+bx_1^3+bx_2^3+bx_3^3
\end{cases}
\]
Since $r=3$ (so $r-1=2$) and $n=4$, the exponents in Theorem~\ref{thm:disc2} are $\binom{4}{1}(r-1)^3 = 32$, $\binom{4}{2}(r-1)^2=24$, $\binom{4}{3}(r-1)^1=8$, so
\begin{align*}
6^{\frac{5^4-1}{6}}\Disc(f) &= 3^{4\times 5^3} \times \Res\big(\Theta f^{\{1\}},\Theta f^{\{2\}},\Theta f^{\{3\}},\Theta f^{\{4\}}\big) \times \Res\big(\Theta f^{\{1\}},x_2,x_3,x_4\big)^{32} \\
&\quad \times \Res\big(\Theta f^{\{1\}},\Theta f^{\{2\}},x_3,x_4\big)^{24} \times \Res\big(\Theta f^{\{1\}},\Theta f^{\{2\}},\Theta f^{\{3\}},x_4\big)^{8}.
\end{align*}
We compute
\[
\Res\big(\Theta f^{\{1\}},\Theta f^{\{2\}},\Theta f^{\{3\}},\Theta f^{\{4\}}\big) = (2a+3b)^{27}(2a-b)^{81}, \qquad \Res\big(\Theta f^{\{1\}},x_2,x_3,x_4\big) = 2a,
\]
\[
\Res\big(\Theta f^{\{1\}},\Theta f^{\{2\}},x_3,x_4\big) = (2a-b)^3(2a+b)^3, \qquad \Res\big(\Theta f^{\{1\}},\Theta f^{\{2\}},\Theta f^{\{3\}},x_4\big) = 2^9(2a-b)^{18}(a+b)^9.
\]
Raising to the exponents $1,32,24,8$ respectively and simplifying (the powers of $2$ on both sides cancel exactly, since $6^{104} = 2^{104}3^{104}$ on the left matches $2^{32}\times 2^{72}$ on the right), we obtain
\[
\Disc(f) = 3^{396}\, a^{32}(a+b)^{72}(2a-b)^{297}(2a+b)^{72}(2a+3b)^{27}.
\]
\end{example}

\section{Conclusion and perspectives}
\label{sec:conclusion}

In this paper we studied polynomial systems equivariant with respect to the complex reflection group $G(r,n)$, and homogeneous polynomials invariant under it. In Section~\ref{sec:resultant}, extending the decomposition formula of Bus\'e and Karasoulou \citep{BuseKarasoulou2016} for $S_n$-equivariant systems, we obtained a decomposition formula for the resultant of a $G(r,n)$-equivariant homogeneous polynomial system, first in terms of the associated $S_n$-equivariant system $P^{\{1\}},\dots,P^{\{n\}}$ (Theorem~\ref{thm:main1}), and then, using divided differences, directly in terms of the original system $f^{\{1\}},\dots,f^{\{n\}}$ (Proposition~\ref{prop:deltaP} and Theorem~\ref{thm:main2}). In each case the resultant of the full system splits into a product of resultants of smaller, partition-indexed subsystems, which are considerably easier to compute; the worked examples of Section~\ref{sec:resultant} illustrate the resulting simplification in practice.

In Section~\ref{sec:discriminant}, we applied this decomposition to the discriminant of a $G(r,n)$-invariant homogeneous polynomial $f$, along two complementary routes. Combining our resultant decomposition with the composition formula of Bus\'e and Jouanolou \citep{BuseJouanolou2014} for the discriminant of $f = P(x_1^r,\dots,x_n^r)$ yields Theorem~\ref{thm:disc1}, which expresses $\Disc(f)$ in terms of the same partition-indexed resultants as in Section~\ref{sec:resultant}, up to a correction factor $K(f)$, inherited from \citep{BuseJouanolou2014}, that our Theorem~\ref{thm:disc1} does not make explicit. To obtain a fully explicit formula, we introduced in Proposition~\ref{prop:theta} the polynomials $\Theta f^{\{i\}}$ occurring in $\partial f/\partial x_i = rx_i^{r-1}\Theta f^{\{i\}}$, and showed in Theorem~\ref{thm:disc2} that $\Disc(f)$ decomposes explicitly, with no unspecified factor, as a product of resultants of the $\Theta f^{\{i\}}$'s and of coordinate functions, with multiplicities $\binom{n}{i}(r-1)^{n-i}$ governed by the combinatorics of the reflection group. The worked example of Section~\ref{sec:discriminant} confirms this formula numerically.

Several questions remain open. First, it would be worthwhile to make the factor $K(f)$ of Theorem~\ref{thm:disc1} fully explicit, for instance by relating it to the factors appearing in Theorem~\ref{thm:disc2}; this would give a direct, self-contained proof of Theorem~\ref{thm:disc1} that does not rely on \citep{BuseJouanolou2014}. Second, $G(r,n)=G(r,1,n)$ is only one family in the Shephard--Todd classification of complex reflection groups; extending the present decomposition to the groups $G(r,q,n)$, and more generally to other (in particular exceptional) complex reflection groups, is a natural next step. Third, on the computational side, the partition-indexed decomposition suggests a genuine algorithmic gain over a direct application of Macaulay's formula to the full system, in the spirit of the motivating examples discussed in the Introduction (such as symmetry-invariant vortex equilibria \citep{FaugereSvartz2012}); it would be interesting to quantify this gain precisely and to implement it for large $n$ and $d$. Finally, since the discriminant of $f$ controls the singularities of the hypersurface it defines, our explicit formula could be used to study how the $G(r,n)$-symmetry constrains the discriminant locus, and to identify invariant families of polynomials with prescribed (or excluded) types of singularities.

\section*{Compliance with Ethical Standards}
\subsection*{Funding}
No funding were received to support this study 
\subsection*{Conflicts of Interest}
The authors declare that there are no conflicts of interest.
\subsection*{Author contribution declaration}  All the authors contribute equally in the conception and writing of this paper
\subsection*{Data Availability Statements}
No data were used to support this study

\end{document}